\tikzset{>=stealth}
\def\@tocline#1#2#3#4#5#6#7{\relax
  \ifnum #1>\c@tocdepth 
  \else
    \par \addpenalty\@secpenalty\addvspace{#2}%
    \begingroup \hyphenpenalty\@M
    \@ifempty{#4}{%
      \@tempdima\csname r@tocindent\number#1\endcsname\relax
    }{%
      \@tempdima#4\relax
    }%
    \parindent\z@ \leftskip#3\relax \advance\leftskip\@tempdima\relax
    \rightskip\@pnumwidth plus4em \parfillskip-\@pnumwidth
    #5\leavevmode\hskip-\@tempdima
      \ifcase #1
       \or\or \hskip 2em \or \hskip 2em \else \hskip 3em \fi%
      #6\nobreak\relax
    \dotfill\hbox to\@pnumwidth{\@tocpagenum{#7}}\par
    \nobreak
    \endgroup
  \fi}
\newtheorem{intro-thm}{Theorem}[]
\theoremstyle{plain}
\newtheorem{thm}{Theorem}[section]
\newtheorem{theorem}[thm]{Theorem}
\newtheorem{lemma}[thm]{Lemma}
\theoremstyle{definition}
\newtheorem{remark}[thm]{Remark}
\newtheorem{notation}[thm]{Notation}
\newtheorem{definition}[thm]{Definition}
\newcommand{\rdown}[1]{\lfloor{#1}\rfloor}
\newcommand{\inj}{\hookrightarrow}
\newcommand{\tensor}{\otimes}
\newcommand{\Union}{\bigcup}
\newcommand{\intersection}{\cap}
\newcommand{\union}{\cup}
\newcommand{\del}{\partial}
\newcommand{\Ker}{{\rm Ker  }}
\newcommand{\Spec}{{\rm Spec \,}}
\newcommand{\Char}{{\rm char}}
\renewcommand{\tilde}{\widetilde}
\newcommand{\sC}{{\mathcal C}}
\newcommand{\sO}{{\mathcal O}}
\newcommand{\sP}{{\mathcal P}}
\newcommand{\sQ}{{\mathcal Q}}
\newcommand{\sT}{{\mathcal T}}
\newcommand{\A}{{\mathbb A}}
\newcommand{\N}{{\mathbb N}}
\let\syn\mathsf
\newcommand{\Step}[1]{\underline{\syn{Step \ {#1}}}}
\newcommand{\scr}{\scriptscriptstyle}
\newcommand{\dg}{\syn{deg}}
\newcommand{\hash}{\raisebox{.3ex}{\footnotesize \# }}
\begin{document}

\title{G\MakeLowercase{abber's presentation lemma for finite fields}}
\author{{A\MakeLowercase{mit} H\MakeLowercase{ogadi}} \MakeLowercase{and}
{G\MakeLowercase{irish} K\MakeLowercase{ulkarni}}}

\subjclass[2000]{14F20, 14F42}


\date{}

\begin{abstract}
We give a proof of Gabber's presentation lemma for finite fields. We first prove this lemma in the special case of open subsets of the affine plane using ideas from Poonen's proof of Bertini's theorem over finite fields. We then reduce the case of general smooth varieties to this special case. 
\end{abstract}

\maketitle

\section{Introduction}

A presentation lemma proved by Gabber in \cite{gabber} (see also \cite{chk}) plays a foundational role in $\A^1$-algebraic topology as developed by F. Morel in  \cite{Morel}. This lemma can be thought of as an algebro-geometric analogue of tubular neighbourhood theorem in differential geometry.  The current published proof of Gabber's presentation lemma works only over infinite fields. In a private communication to F. Morel, Gabber has pointed out that the proof of this theorem also holds for finite fields. Unfortunately there is no published proof for this case.

The goal of this paper is to prove the following version of Gabber's presentation lemma over finite fields. 

\begin{theorem}\label{gabberfinite}
Let $X$ be a smooth variety of dimension $d\geq 1$ over a finite field $F$ and $Z\subset X$ be a closed subvariety. Let $p\in Z$ be a point. Let $\A^d_F \xrightarrow{\pi} \A^{d-1}_F$ denote the projection onto the first $d-1$ coordinates.  Then there exists
\begin{enumerate}
\item[(i)] an open neighbourhood $ U\subset X$  of $p$,
\item[(ii)]  a map $\Phi:U\to \A_F^d$,
\item[(iii)] an open neighbourhood $V\subset \A^{d-1}_F$ of $\Psi(p)$ where 
 $\Psi:U\to \A_F^{d-1}$ denotes the composition
$$ U\xrightarrow{\Phi}\A_F^d \xrightarrow{\pi} \A_F^{d-1}$$
\end{enumerate}
such that 

\begin{enumerate}
\item $\Phi$ is \'{e}tale. 
\item $\Psi_{|Z_V}:Z_V \to V$ is finite where $Z_V:=Z\intersection \Psi^{-1}(V)$. 
\item $\Phi_{|Z_V}:Z_V \to \A^1_V= \pi^{-1}(V)$ is a closed immersion. 
\end{enumerate}
\end{theorem}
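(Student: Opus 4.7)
The plan is to reduce the statement to the case where $X$ is an open subset of $\A^2_F$ and to handle that case by a Poonen-style density argument with polynomials of large degree.

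First, I would reduce to $X$ being an open subset of $\A^d_F$. Since $X$ is smooth of dimension $d$ at $p$, some étale neighborhood of $p$ in $X$ factors through an open $U_0\subset \A^d_F$; after replacing $X$ by $U_0$ and pulling back $Z$ and $p$, it suffices to prove the theorem in this more rigid situation. Next I would reduce from general $d$ to $d=2$. Classically one picks $d-2$ generic linear forms $\phi_1,\ldots,\phi_{d-2}$ so that $(\phi_1,\ldots,\phi_{d-2}):X\to \A^{d-2}_F$ has a smooth $2$-dimensional fiber $X_0$ over $\Psi(p)$ on which $Z$ becomes quasi-finite at $p$. Over a finite field "generic" fails, so I would instead use a Poonen-style density count to locate such $\phi_i$ among polynomials of large degree. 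Applying the $d=2$ case of the theorem to $(X_0,\,Z\cap X_0,\,p)$ to produce $\phi_{d-1},\phi_d$ and then spreading out gives a map $\Phi=(\phi_1,\ldots,\phi_d)$ on a neighborhood of $p$ that is étale at $p$ and has the desired behaviour on $Z$ after shrinking $V$.

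For the $d=2$ base case, fix a compactification $\bar X\supset X$ and the closure $\bar Z\supset Z$. The plan is to sieve pairs $(\phi_1,\phi_2)$ of polynomials of large degree $n$ so that (i) $(\phi_1,\phi_2)$ is étale at $p$; (ii) $\phi_1|_{\bar Z}$ is finite on a neighborhood of $p$ (controlled through $\bar Z\setminus Z$); and (iii) $(\phi_1,\phi_2)|_Z$ separates any two distinct geometric points of $Z$ lying over the same point of $\A^1_F$, as well as separates tangent directions at each point. Condition (i) and the local part of (ii) impose finitely many linear conditions on the Taylor coefficients of $\phi_1,\phi_2$ at finitely many closed points, and are handled by Poonen's "low-degree sieve", which guarantees a positive density of pairs satisfying these local constraints as $n\to\infty$.

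The main obstacle will be condition (iii), the separation condition needed for the closed immersion $\Phi|_{Z_V}\hookrightarrow \A^1_V$. It is not local on $Z$: it couples all pairs $(z,z')$ in $Z\times Z$ off the diagonal, and is the analogue of the "medium- and high-degree" regime of Poonen's Bertini theorem. Bounding the density of $(\phi_1,\phi_2)$ that fail to separate a given pair and then summing over pairs in $Z\times Z$ will require Lang--Weil type estimates together with a careful decomposition of the parameter space according to the degrees of vanishing of $\phi_1$ along subschemes of $Z\times Z$. The scheme-theoretic (not merely set-theoretic) character of the closed immersion requirement, which forces $\phi_2$ to distinguish tangent vectors at non-reduced points of the fibres of $\phi_1|_Z$, must be folded into the same sieve, and getting the three densities to combine to something strictly positive is where the real work will lie.
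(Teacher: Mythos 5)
Your high-level plan — reduce to open subsets of $\A^d_F$, then to $d=2$, then run a Poonen-style sieve — matches the paper's skeleton, but both of the crucial steps diverge from what the paper actually does, and in each case the divergence is precisely where the difficulty lies.

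For the reduction from general $d$ to $d=2$, you propose to choose $\phi_1,\ldots,\phi_{d-2}$ by a density count so that the fiber $X_0$ over $\Psi(p)$ is a smooth surface, then apply the $d=2$ case to $(X_0,\,Z\cap X_0,\,p)$ and ``spread out.'' The spreading-out step is the gap: applying the $d=2$ case to $X_0$ gives you \'etaleness and a closed-immersion property \emph{only along one fiber}, and it is not clear how to propagate these to a full neighbourhood of $p$ in $X$ together with finiteness of $\Psi$ on $Z$. The paper avoids this by an inductive argument (Lemma~\ref{dto2}) with no density count at all: it restricts to the hyperplane $\syn{Z}(X_1)\cong\A^{d-1}_F$, applies the inductive hypothesis there to get $\overline{\phi}_2,\ldots,\overline{\phi}_d$, and then \emph{lifts} these to $\phi_2,\ldots,\phi_d$ on $\A^d_F$ using a variant of the Noether normalization trick (Lemma~\ref{nntrick}), carefully re-verifying the four conditions of Definition~\ref{defpresents} after the lift. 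The choice of a polynomial $Q$ vanishing on a finite exceptional set, raised to an $\ell$-th power with $\ell$ divisible by $\Char(F)$ so as not to disturb \'etaleness, is what makes this lift work; nothing in your sketch plays that role.

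For the $d=2$ base case, you propose to sieve \emph{pairs} $(\phi_1,\phi_2)$ and to control the separation condition by summing over pairs $(z,z')\in Z\times Z$. You correctly flag this as ``where the real work will lie,'' but you offer no mechanism for making the densities combine to something positive, and the paper's introduction explicitly says the authors could not handle the high-degree error term this way. The paper's resolution is structurally different: it sieves for a \emph{single} polynomial $\phi$ (Lemmas~\ref{findg}, \ref{findh}, \ref{phiexists}) satisfying conditions (a), ($b_r$), ($c_r$), ($d_r$) which make the coordinate $Y$ radicial on the low-degree part $S_{\leq (d-c)/3}$ of $S=\syn{Z}(\phi)\cap Z$, and then \emph{constructs} the second coordinate $h$ deterministically (Lemma~\ref{hexists}) by Chinese remainder and Noether normalization, using the Bezout bound $\#S\leq dm$ together with the fact that $\A^1_F$ has more than $dm$ points in each sufficiently high degree. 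This is the ``small trick'' the introduction refers to, and it entirely sidesteps the $Z\times Z$ sieve you anticipate having to run. Without that trick, your Step (iii) is a genuine open problem rather than a computation.

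So: same skeleton, but both load-bearing steps are missing. The induction-plus-Noether-normalization reduction (Section~\ref{inductiond}) and the construct-$h$-by-hand device (Lemma~\ref{hexists}) are the two ideas that make the theorem go through over a finite field, and neither appears in your proposal.
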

\begin{remark}\label{simplify}
Without loss of generality, we may (and will) assume henceforth that $X$ is affine. Moreover, by \cite[3.2]{chk}, we may also assume that  $Z$ is a principal divisor defined by $f\in \sO(X)$ and $p$ is a closed point. 
\end{remark}

\begin{remark}
If one finds $U,\Phi,V,\Psi$ satisfying conditions (1),(2),(3) of  \eqref{gabberfinite}, one can also arrange, by shrinking $U$ if necessary, the following additional condition 
\begin{enumerate}
\item[(4)]  $Z_V=\Phi^{-1}\Phi(Z_V)$.
\end{enumerate} 
To see this, let $\widetilde{Z}$ denote the image of $Z_V$ in $\A^1_V$. The morphism $\Phi^{-1}\Phi(Z_V)\to \widetilde{Z}$ is \'{e}tale, and it admits a section as $Z_V$ maps isomorphically onto $\widetilde{Z}$. Thus $\Phi^{-1}\Phi(Z_V)$ is a disjoint union of $Z_V$ and a closed subset  $T$ of $\Psi^{-1}(V)$. Replacing $U$ by $U\backslash T$, one sees that the additional condition $(4)$ is satisfied.  
\end{remark}

The proof of Gabber's presentation lemma for infinite fields (see  \cite[3.1]{gabber} or \cite[3.1]{chk}) shows that for the maps $\Phi,\Psi$ appearing in the statement of the lemma, suitable generic choices work.  The problem in making this proof work over a finite field is very similar to the problem of making Bertini's theorem work over a finite field. Bertini's theorem for finite fields  was proved by Poonen in \cite{poonen} using an extremely clever counting argument. Because of the broad similarities of the issues involved, it is natural to try to use Poonen's  argument to prove Gabber's presentation lemma over finite fields. However, Poonen's counting argument, in our opinion, is easier to apply in the case of subvarieties of an affine space. Thus, the first  step of the proof of Theorem \ref{gabberfinite} is reduction to the case where $X$ is an open subset of $\A^d_F$. This is done in section \ref{firstred}. Unfortunately, we found even this case too complex to directly apply Poonen's ideas from \cite{poonen}. Fortunately, we are able to reduce this complexity by using induction on $d$ to reduce to the case where $d=2$, i.e. $X$ is an open subset of $\A^2_F$. This is done in  section \ref{inductiond}. This induction argument, although short, was one of the most time-taking tasks for us in proving Theorem \ref{gabberfinite}.   An important ingredient of this induction is a slightly modified version of Noether normalization trick (see \ref{nntrick}).  

The case of open subsets of $\A^2_F$ is now ideal for using Poonen's counting argument. Indeed, the handling of points of small degree is very similar to that of \cite{poonen}. However, we are unable to handle the error term for `high degree points' as is done in \cite{poonen}. We fix this with a small trick (\ref{hexists}). \\

\noindent {\bf Acknowledgements}: We thank F. Morel for his comments and for answering our questions on the current status of this result.  We thank A. Asok, F. D\'eglise, M. Levine and J. Riou for their comments during the early stage of this project. We thank Anand Sawant and Charanya Ravi for pointing out a mistake in the previous version of the paper. We also thank the referee for numerous suggestions.

\section{Reduction to open subsets of $\A^d_F$}\label{firstred}

The goal of this section is to prove Lemma \ref{redopen}, which reduces Theorem \ref{gabberfinite} to the case where $X$ is an open subset of $\A^d_F$ and $p\in Z\subset X$ is a closed point with first $d-1$ coordinates equal to $0$.

\begin{notation}\label{notation} Throughout this paper we work over a fixed finite field $F$. We further fix the following notation. 
\begin{enumerate}
\item Let $Y$ be a subset of a scheme $X/F$. We let $Y_{ \scr \leq r}:= \{x\in Y\ |\ \dg(x) \leq r\}$ and similarly $Y_{ \scr < r}:= \{x\in Y\ |\ \dg(x) < r\}$ and $Y_{ \scr = r}:= \{x\in Y\ |\ \dg(x) = r\}$.  
\item For $f_1,...,f_i\in F[X_1,...,X_n]$ we let $\syn{Z}(f_1,...,f_i)$ denote the closed subscheme of $\A^n_F$ defined by the ideal  $(f_1,...,f_i)$. 
\end{enumerate}
\end{notation}

We first start by recalling the following standard trick (see \cite{mum}) used in the proof of Noether's normalization lemma. 

\begin{lemma}\cite[page 2]{mum}\label{nntrick1}
 Let $k$ be any field and $n\geq 1$ be any integer. Let $Z/k$ be a finitely generated affine scheme of dimension at most  $n-1$.  Let $$Z\xrightarrow{(\phi_1,...,\phi_n)}\A^{n}_k$$ be a finite map. Let  $Q (T)\in k[T]$ be a non constant monic polynomial and $Q=Q(\phi_n)$. Then for $\ell>>0$, the map 
$$ Z\xrightarrow{(\phi_{\scr 1}-Q^{\ell^{n-1}},\ldots,\phi_{\scr n-1}-Q^{\ell})} \A^{n-1}_k$$ is finite.
\end{lemma}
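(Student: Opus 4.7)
The plan is to run the classical Noether normalization trick, carefully tracking degrees so that the monic substitution by powers of $Q$ produces the required integral relation for $\phi_n$, from which finiteness follows formally.

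First, I would note that since $Z \to \A^n_k$ is finite, $\sO(Z)$ is module-finite over the subring $k[\phi_1, \ldots, \phi_n]$. By going-up this subring has Krull dimension equal to $\dim Z \le n-1$, so the kernel of the surjection $k[T_1, \ldots, T_n] \surj k[\phi_1, \ldots, \phi_n]$ sending $T_i \mapsto \phi_i$ is a nonzero ideal. I fix a nonzero polynomial $f(T_1,\ldots,T_n)$ in this kernel, so that $f(\phi_1,\ldots,\phi_n)=0$ in $\sO(Z)$.

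Next, write $\phi_i' := \phi_i - Q^{\ell^{n-i}}$ for $i=1,\ldots,n-1$; I claim it suffices to prove that $\phi_n$ is integral over $k[\phi_1', \ldots, \phi_{n-1}']$. Indeed, given that, each $\phi_i = \phi_i' + Q^{\ell^{n-i}}(\phi_n)$ is also integral, so $k[\phi_1, \ldots, \phi_n]$ is a finitely generated integral extension (hence a finite module) of $k[\phi_1', \ldots, \phi_{n-1}']$; combining with the finiteness of $\sO(Z)$ over $k[\phi_1, \ldots, \phi_n]$ yields the desired finiteness of the new map. The substantive step is therefore the construction of a monic relation for $\phi_n$. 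I would substitute $T_i \mapsto T_i' + Q(T_n)^{\ell^{n-i}}$ in $f$, producing a polynomial $F(T_1',\ldots, T_{n-1}', T_n)$ with $F(\phi_1',\ldots,\phi_{n-1}',\phi_n)=0$. For each monomial $c\, T_1^{a_1}\cdots T_n^{a_n}$ of $f$, expanding $(T_j' + Q^{\ell^{n-j}})^{a_j}$ and collecting the contribution of top degree in $T_n$ gives, because $Q$ is monic of degree $d := \deg Q$, a leading term $c\, T_n^{N(a)}$ with
$$N(a_1,\ldots,a_n) \;=\; d\sum_{j=1}^{n-1} a_j\, \ell^{\,n-j} \;+\; a_n.$$

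Viewed as a base-$\ell$ expansion, these $N$-values separate distinct multi-indices $(a_1,\ldots,a_n)$ as soon as $\ell$ exceeds $\max(d\cdot a_j,\, a_n)$ over the finitely many monomials of $f$. For such $\ell$, the unique monomial achieving the maximal $N$-value contributes a nonzero scalar $c_{\max}\in k^\times$ as the leading coefficient of $F$ in $T_n$, so after rescaling by $c_{\max}^{-1}$ one obtains a monic polynomial in $T_n$ with coefficients in $k[T_1',\ldots,T_{n-1}']$ vanishing at $\phi_n$, giving the integral relation. The main (mild) obstacle is purely bookkeeping: checking that the exponent function $N$ is injective on the support of $f$ for large $\ell$, so that no cancellation can occur in the leading $T_n$-coefficient of $F$; everything else is standard Noether-normalization packaging.
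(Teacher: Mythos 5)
Your proof is correct and is essentially the classical Noether normalization argument that the paper cites to Mumford (and sketches in the closely related Lemma \ref{nntrick}): take a nonzero relation $f(\phi_1,\ldots,\phi_n)=0$, rewrite $f$ in the shifted coordinates, and use distinctness of the $T_n$-degrees $N(a)=d\sum_{j<n}a_j\ell^{n-j}+a_n$ for $\ell$ large (via uniqueness of base-$\ell$ expansions) to see that the top $T_n$-coefficient is a nonzero scalar, hence $\phi_n$ is integral over $k[\phi_1',\ldots,\phi_{n-1}']$. Passing finiteness through the tower $\sO(Z)$ over $k[\phi_1,\ldots,\phi_n]=k[\phi_1',\ldots,\phi_{n-1}'][\phi_n]$ over $k[\phi_1',\ldots,\phi_{n-1}']$, as you do, is exactly the standard packaging.
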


\begin{remark}\label{q=phin}
We claim that finiteness of $Z\xrightarrow{(\phi_1,...,\phi_n)}\A^{n}_k$ implies that of $ Z \xrightarrow{(\phi_1,...,Q)}\A^{n}_k$. This is because the later map is a composition of the following two finite maps
 $$Z\xrightarrow{(\phi_1,...,\phi_n)}\A^{n}_k \xrightarrow{(Y_1,\ldots ,Q(Y_n))}\A^{n}_k.$$
 One can thus easily reduce the proof of the above general case to the case where $Q(T)=T$.  Unless explicitly mentioned, we will usually assume $Q(T)=T$ while applying the lemma. As in the proof of Noether normalization, the above lemma is usually applied repeatedly until one gets a map from $Z$ to $\A^{\dim(Z)}_k$. 
\end{remark}

\begin{lemma}\label{redopen} 
Let $p\in Z\subset X$ be as in Theorem \ref{gabberfinite}. Further, assume that $X$ is affine, $Z$ is a principal divisor and $p$ is a closed point (see Remark \ref{simplify}). Then there exists a map $\varphi:X\to \A^d_F$ and an open neighbourhood $W$ of $\varphi(p)$ such that 
\begin{enumerate}
\item $\varphi^{-1}(W) \to W$ is \'{e}tale. 
\item $Z_W:= Z\intersection \varphi^{-1}(W) \to W$ is a closed immersion. 
\item The first $d-1$ coordinates of $\varphi(p)$ are $0$. 
\end{enumerate}
In particular, it suffices to prove Theorem \ref{gabberfinite} where $X$ is an open subset of $\A^d_F$ and the first $d-1$ coordinates of $p$ are zero. 
\end{lemma}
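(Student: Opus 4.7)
The plan is to construct $\varphi = (g_1, \ldots, g_d) : X \to \A^d_F$ with $g_i \in \mathfrak{m}_p$ for $i < d$ (so the first $d-1$ coordinates of $\varphi(p)$ vanish), with $\varphi$ \'etale at $p$, and with $\varphi|_Z$ a closed immersion over some Zariski neighbourhood of $\varphi(p)$. The first $d-1$ components will come from a Noether normalisation of $Z$ modified to vanish at $p$; the last will be chosen so that $h_d(p)$ is a primitive element of $k(p)/F$.

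\emph{Step 1 (Noether normalisation of $Z$ inside $\mathfrak{m}_p$).} Embed $X \hookrightarrow \A^N_F$ with coordinates $\phi_1, \ldots, \phi_N$. Set $q = |F|$ and $r = [k(p):F]$, and replace each $\phi_i$ by $\tilde\phi_i := \phi_i^{q^r} - \phi_i$. Since every element of $k(p) = \F_{q^r}$ satisfies $x^{q^r} = x$, we have $\tilde\phi_i \in \mathfrak{m}_p$. The map $y \mapsto x^{q^r} - x$ realises $F[x]$ as a free $F[y]$-module of rank $q^r$, so $(\tilde\phi_1, \ldots, \tilde\phi_N) : X \to \A^N_F$ remains finite. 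Iterating Lemma \ref{nntrick1} with $Q(T) = T$ (as in Remark \ref{q=phin}) on $(\tilde\phi_i)|_Z : Z \to \A^N_F$ preserves the $\mathfrak{m}_p$-condition, because each modification $\tilde\phi_j - \tilde\phi_n^{\ell^{n-j}}$ stays in $\mathfrak{m}_p$; after $N - d + 1$ eliminations we obtain $h_1, \ldots, h_{d-1} \in \mathfrak{m}_p$ with $(h_1, \ldots, h_{d-1})|_Z : Z \to \A^{d-1}_F$ finite.

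\emph{Step 2 (The $d$-th coordinate and \'etaleness at $p$).} Pick $h_d \in \sO(X)$ with $h_d(p) = \alpha$ a primitive element of $k(p)/F$; this is possible since the residue map $\sO(X) \to k(p)$ is surjective. Then $(h_1, \ldots, h_d)$ sends $p$ to $(0, \ldots, 0, \alpha) \in \A^d_F$, a closed point with residue field $F(\alpha) = k(p)$. For \'etaleness at $p$ the differentials $dh_1(p), \ldots, dh_d(p)$ must form a $k(p)$-basis of $T^*_p X$. If the straightforward choice fails, replace $h_i$ ($i<d$) by $g_i := h_i + a_i f$ for suitable $a_i \in \sO(X)$: this leaves $g_i|_Z = h_i|_Z$ intact (preserving finiteness on $Z$) while altering $dh_i(p)$ by $a_i(p)\,df(p)$. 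A linear-algebra argument, exploiting smoothness of $X$ at $p$ and the freedom of choice in $a_i$, $h_d$, and the initial Noether tuple, supplies a basis.

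\emph{Step 3 (Closed immersion and conclusion).} The resulting $\varphi := (g_1, \ldots, g_{d-1}, h_d)$ is \'etale at $p$, and $\varphi|_Z$ is finite (project to $\A^{d-1}_F$). At $p$, the stalk map $\sO_{\A^d, \varphi(p)} \to \sO_{Z, p}$ is surjective: the residue-field map is an isomorphism by primitivity of $\alpha$, and the cotangent map is surjective because $\{dg_i(p), dh_d(p)\}$ spans $T^*_p X$ (which surjects onto $T^*_p Z$) and $d\mu(h_d)(p) = \mu'(\alpha)\,dh_d(p) \neq 0$ by separability of the minimal polynomial $\mu$ of $\alpha$. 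Standard arguments (using finiteness of $\varphi|_Z$, openness of the \'etale locus, and Zariski's Main Theorem to control the finitely many other preimages of $\varphi(p)$) then produce an open $W \ni \varphi(p)$ in $\A^d_F$ for which $\varphi^{-1}(W) \to W$ is \'etale and $Z_W \to W$ is a closed immersion. The main obstacle is Step 2: linear independence of the $dh_i(p)$'s cannot be arranged by generic choice over a finite field, and the crucial flexibility is that $f$-perturbations do not disturb the Noether property on $Z$.
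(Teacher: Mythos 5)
Your overall plan shares the spirit of the paper's argument (Noether normalisation to get $h_1,\ldots,h_{d-1}\in\mathfrak{m}_p$, a primitive-element final coordinate, then a Nakayama argument to get the closed immersion on a neighbourhood), but there are three genuine gaps, and the first one is fatal as written.

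The crux you yourself flag at the end of Step 2 is not resolved by the $f$-perturbation. Replacing $h_i$ by $g_i:=h_i+a_if$ only changes the differential at $p$ by $a_i(p)\,df(p)$. But $Z=\syn{Z}(f)$ is an arbitrary principal divisor and need not be smooth at $p$; when $p\in\Sing(Z)$ we have $df(p)=0$ in $\mathfrak{m}_p/\mathfrak{m}_p^2$, and the perturbation changes nothing. The paper avoids this by reversing the order of operations: it \emph{starts} from a $k(p)$-basis $\{x_1,\ldots,x_{d-1},w\}$ of $\mathfrak{m}_p/\mathfrak{m}_p^2$, then adjusts $\hat y$ by an $\epsilon\in\mathfrak{m}_p$ so that $h(y)$ replaces $w$ in the basis (the $\epsilon$-correction in Step 1), and only afterwards runs Noether normalisation using exponents $\ell_r$ divisible by $\Char(F)$, so that every subsequent modification lies in $\mathfrak{m}_p^2$ and leaves the cotangent basis untouched. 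That divisibility-of-$\ell$ device is the substitute for the unavailable generic-choice argument; it is missing from your Step 1.

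Two further problems. First, Step 1 only arranges finiteness of $(h_1,\ldots,h_{d-1})$ over $Z$, but the paper needs finiteness over the larger closed set $B=(X\setminus U)\sqcup Z$ containing the non-\'etale locus $D$: this is what guarantees that $\varphi(D)$ is closed, so one can excise it to get $\varphi^{-1}(W)\to W$ \'etale. With finiteness only over $Z$, your ``openness of the \'etale locus'' step does not produce a Zariski-open $W$. Second, the appeal to Zariski's Main Theorem to ``control the finitely many other preimages of $\varphi(p)$'' does not work: if there is a point $q\in Z$, $q\neq p$, with $\varphi(q)=\varphi(p)$, then \emph{no} open $W\ni\varphi(p)$ can make $Z_W\to W$ injective, since shrinking $W$ cannot separate points with the same image. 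The paper's Step 3 is precisely the construction that rules this out: it replaces $y$ by $z=y+\delta^{\Char(F)}$, with $\delta$ chosen by CRT so that $z\equiv 0 \ \syn{mod}\ \mathfrak{m}_i$ at the other fibre points $p_i$ (hence $h(z)(p_i)=h(0)\neq 0$ and those points leave the fibre of $\varphi$), while $\delta\equiv 0 \ \syn{mod}\ \mathfrak{m}_p$ and the $\Char(F)$-th power keeps everything in $\mathfrak{m}_p^2$, preserving the cotangent basis at $p$.
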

\begin{proof}   Let
\begin{enumerate}
\item[-] $X=\Spec(A)$.
\item[-] $Z= \Spec(A/(f))$ and let $\overline{A}:= A/(f)$. 
\item[-] $\mathfrak{m}\subset A$ be the maximal ideal of the closed point $p$.
\item[-] $F(p)$ denote the residue field of $p$.

\end{enumerate}

\noindent $\Step{1}$:
Since $X/F$ is smooth, $\dim_{F(p)}(\mathfrak{m}/\mathfrak{m}^2)=d$. Choose $\{x_{\scr 1},...,x_{\scr d-1}\} \subset {\mathfrak m}$ such that they span a $d-1$ dimensional $F(p)$-subspace of $\mathfrak{m}/\mathfrak{m}^2$. In this step we claim that  there exists $y \in A$ such that 
\begin{enumerate}

\item $y\   {\rm mod}\  \mathfrak{m}$  is a primitive element of $ F(p)/F$.
\item The set  $\{x_1,...,x_{d-1},h(y)\}$ (modulo $\mathfrak{m}^2$) gives a $F(p)$-basis  of ${\mathfrak m}/{\mathfrak m}^2$, where $h$ is the minimal polynomial of $y\   {\rm mod}\  \mathfrak{m}$.
\item  The map $(x_1,\ldots,x_{d-1},{y}):X \xrightarrow{\eta} \A^d$ is \'{e}tale at $p$.
\item The map $\eta$ induces an isomorphism on residue fields $F(\eta(p)) \to F(p)$. 
\end{enumerate}

Now let $w\in \mathfrak{m}$ be an element such that $\{x_{\scr 1},...,x_{\scr d-1},w\}$ span $\mathfrak{m}/\mathfrak{m}^2$ as a $F(p)$-vector space.  Let $c$ be a primitive element of $F(p)/F$ and $h$ be its minimal polynomial. Choose $\hat{y}\in A$ such that
$$\hat{y}  \equiv c \mod \mathfrak{m}.$$ Since $c$ is separable over $F$, $h'(c)\neq 0$. Thus $h'(\hat{y}) \notin \mathfrak{m}$ or equivalently $h'(\hat{y})$ is a unit in the ring $A/\mathfrak{m}^2$.
Choose $\epsilon\in \mathfrak{m}$  such that 
$$ \epsilon \equiv \frac{w-h(\hat{y})}{h'(\hat{y})} \ \syn{mod} \ \mathfrak{m}^2.$$
Thus the $F(p)$-span of $\{x_{\scr 1},...,x_{\scr d-1},h(\hat{y})+\epsilon h'(\hat{y})\}$ is   $\mathfrak{m}/\mathfrak{m}^2$.
Let
$${y}=\hat{y}+\epsilon.$$
We note that $$h({y})=h(\hat{y}+\epsilon) \equiv h(\hat{y})+\epsilon h'(\hat{y}) \mod 
\mathfrak{m}^2.$$
Hence $\{x_1,\ldots,x_{d-1},h(y)\}$ gives a $F(p)$-basis for $\mathfrak{m}/\mathfrak{m}^2$.

Now let $\eta$ be the map $(x_1,\ldots,x_{d-1},{y}):X \xrightarrow{} \A^d_F$. Since $y \ \syn{mod} \ \mathfrak{m}$ is a primitive element of $F(p)$, one observes that $F(\eta(p)) \to F(p)$ is an isomorphism. It remains to show that $\eta$ is \'{e}tale at $p$.  The maximal ideal of $\eta(p)$ in $F[X_1,...,X_d]$ is $\mathfrak{n}=(X_1,\ldots,X_{d-1},h(X_d))$.  
As $\{x_1,\ldots,x_{d-1},h(y)\}$ is a $F(p)$-basis for $\mathfrak{m}/\mathfrak{m}^2$, {that $\eta$ is \'{e}tale at $p$} follows from the surjectivity of 
$$\mathfrak{n}/\mathfrak{n}^2\xrightarrow{\eta^*}\mathfrak{m}/\mathfrak{m}^2 .$$

\noindent$\Step {2}$:
Let $U$ be an open neighbourhood  of $p$ in $X$ such that $\eta_{\scr{|U}}$ is \'{e}tale. Let $$ B= (X\setminus U) \ \sqcup Z.$$
In this step we modify $x_1,\ldots,x_{d-1}$ to $z_1,\ldots,z_{d-1}$ so that
\begin{enumerate}
\item  The map $\tilde{\eta}=(z_1,\ldots,z_{d-1},{y}):X \rightarrow \A^d_F$ is \'{e}tale on $U$.
\item The set $\{z_1,\ldots,z_{d-1},h({y})\}$ is a $F(p)$ basis for $\mathfrak{m}/\mathfrak{m}^2$. 
\item The map $B \xrightarrow{(z_1,\ldots,z_{d-1})}\A^{d-1}_F$ is finite.
\end{enumerate}

Let $\tilde{A}:= A/I(B)$ and $\tilde{\mathfrak{m}}$ denote the image of $\mathfrak{m}$ in $\tilde{A}$. For any element $\alpha \in A$, let $\tilde{\alpha}$ denote its image in $\tilde{A}$. Choose $y_1,...,y_m \in A$  which generate $A$ as an $F$ algebra. We expand this generating set to include the $x_i$'s. In particular  
\begin{align*}
A & = F[x_{\scr 1},...,x_{\scr d-1},y_{\scr 1},...,y_{\scr m}],\\
\tilde{A} & = F[\tilde{x}_{\scr 1},...,\tilde{x}_{\scr d-1},\tilde{y}_{\scr 1},...,\tilde{y}_{\scr m}].
\end{align*}
The image of $y_{\scr i}$ in $A/\mathfrak{m}$ satisfies a non-constant monic polynomial, say $f_{\scr i}$, over $F$. Let 
\begin{align*}
y_{\scr i,0} & \ :=\ f_i(y_{\scr i})  \in \mathfrak{m}.\\
x_{\scr i,0} & \ := \ x_{\scr i} \\ 
A_{\scr 0} & \ := \ F[x_{\scr 1,0},..,x_{\scr d-1,0},y_{\scr 1,0},...,y_{\scr m,0}] \\
\tilde{A}_{\scr 0} & \ := \ F[\tilde{x}_{\scr 1,0},..,\tilde{x}_{\scr d-1,0},\tilde{y}_{\scr 1,0},...,\tilde{y}_{\scr m,0}]
\end{align*}
Clearly, $\tilde{A}$ is finite over $\tilde{A}_{\scr 0}$. \\

  For $0 \leq r \leq m-1$, we inductively define $A_{\scr r+1}$ and elements $x_{\scr i,r+1}, y_{\scr i, r+1}$ as follows. By \ref{nntrick1}, we choose an integer $\ell_{\scr r}>1$ such that the following definitions make $\tilde{A}_{\scr r}$ a finite $\tilde{A}_{\scr r+1}$-algebra. Since any sufficiently large choice of $\ell_r$ works, we assume that $\ell_r$ is a multiple of the $\Char(F)$. Let
\begin{align*}
 x_{\scr i,r+1}:= & \ \  x_{\scr i,r}-(y_{\scr m-r,r})^{\ell_{\scr r}^i} & \ \forall \ 1\leq i \leq d-1\\
y_{\scr i,r+1}:= & \ \  y_{\scr i,r}-(y_{\scr m-r,r})^{\ell_{\scr r}^{d-1+i}} & \   \forall \ 1\leq i \leq m-r-1 \\
A_{\scr r+1} := & \ \   F[x_{\scr 1,r+1},..,x_{\scr d-1,r+1},y_{\scr 1,r+1},...,y_{\scr m-r-1,r+1}] \\
\tilde{A}_{\scr r+1} := & \ \   F[\tilde{x}_{\scr 1,r+1},..,\tilde{x}_{\scr d-1,r+1},\tilde{y}_{\scr 1,r+1},...,\tilde{y}_{\scr m-r-1,r+1}]
 \end{align*}
Since, $x_{\scr i,0}$ and $y_{\scr i,0}$ belong to $ \mathfrak{m}$, inductively one can observe 
\begin{align*}
& y_{\scr i,r} \in \mathfrak{m} \\
& x_{\scr i,r} \in \mathfrak{m} \\
& x_{\scr i,r+1} \equiv  x_{\scr i,r} \ {\rm mod} \ \mathfrak{m}^2  
\end{align*}
For ease of notation, let us rename 
$$ z_{\scr i}:= x_{\scr i,m}.$$
Note that for all $i\leq d-1$, $ z_i- x_i$ is of the form $\beta_i^{k_i}$ for  $\beta_i\in \mathfrak{m}$ and an integer $k_i$ divisible by $\Char(F)$. This ensures requirements (1) and (2) of Step 2.
Recall that $m$ is an integer such that $\{y_1,...,y_m\}$ are the chosen generators of $A$ as an $F$ algebra. 
It is now straightforward to see that $\{z_{\scr 1},...,z_{\scr d-1}\}\subset \mathfrak{m}$ such that  $\tilde{A}$ is a finite algebra over $\tilde{A}_{\scr m}=F[\tilde{z}_{\scr 1},...,\tilde{z}_{\scr d-1}]$.  \\

\noindent $\Step{3}$: In this step we will further modify $y$ while ensuring that $(1)$ and $(2)$ of the above step continue to hold. Since the map $\tilde{\eta}_{\scr |B}:B\to \A^{d-1}_F$ is finite, there exists finitely many points $\{p,p_1,...,p_t\}\subset B$ which are contained in the zero locus $\syn{Z}(z_{\scr 1},...,z_{\scr d-1})$. Let $\mathfrak{m}_i$ be the maximal ideal corresponding to $p_i$ for $1\leq i \leq t$. 
By Chinese remainder theorem, choose $\delta \in A$ such that
\begin{align*}
\delta \equiv 0 & \ \syn{mod}\ \mathfrak{m} & \\ 
\delta^{\scr {\rm char}(F)} \equiv {-y} & \ \syn{mod} \ \mathfrak{m}_i \ \ \forall \ 1 \leq i \leq t &  ...(\text{note that } A/\mathfrak{m}_i \ \text{is perfect})
\end{align*}
Let $$ z= y+\delta^{\scr {{\rm char}( F)}}.$$
For later use, we note that 
$$ z\equiv 0 \ \syn{mod} \ \mathfrak{m}_i \ \ \ \forall \  1\leq i \leq t.$$
Using the fact that $z-y$ is ${\rm char}(F)$-th power of an element of $\mathfrak{m}$, it is straightforward to deduce the following from (1) and (2) of the above step. 
\begin{enumerate}
\item The map $\varphi: X\to \A^d_F$ defined by $(z_{\scr 1},...,z_{\scr d-1},z)$ is \'{e}tale at $p$. 
\item $\{z_{\scr 1},...,z_{\scr d-1},h(z)\}$ is an $F(p)$-basis of  $\mathfrak{m}/\mathfrak{m}^2$.
\item $z \ \syn{mod}\ \mathfrak{m}$ is the primitive element $c$ of $F(p)/F$.
\end{enumerate} 

We further claim that we have the following equality of  ideals of $\tilde{A}=A/(I(B))$ :
$$\sqrt{\left(\tilde{z}_{\scr 1},...,\tilde{z}_{\scr d-1},h(\tilde{z})\right)} = \tilde{\mathfrak{m}}.$$ To see the claim, we first observe  
\begin{align*}
h(z) & \in \tilde{\mathfrak{m}} \\
h(z) & \notin \tilde{\mathfrak{m}}_i \ \ \ \forall \  1\leq i \leq t.
\end{align*}
The first containment follows as $h$ is the irreducible polynomial of $z \ \syn{mod} \ \mathfrak{m}$. 
Moreover, since $h(0)\neq 0$, the second statement follows from the fact that $z\equiv 0 \ \syn{mod} \ \mathfrak{m}_i.$

As $\{\tilde{\mathfrak{m}},\tilde{\mathfrak{m}}_1,...,\tilde{\mathfrak{m}}_t\}$ are the only prime ideals of $\tilde{A}$ containing the ideal $(\tilde{z}_{\scr 1},...,\tilde{z}_{\scr d-1})$, and $h(z)\notin \mathfrak{m}_i \ \forall \ i$, we conclude that $\tilde{\mathfrak{m}}$ is the unique prime ideal of $\tilde{A}$ containing the ideal 
$\left(\tilde{z}_{\scr 1},...,\tilde{z}_{\scr d-1},h(\tilde{z})\right)$. Therefore 
$$\sqrt{\left(\tilde{z}_{\scr 1},...,\tilde{z}_{\scr d-1},h(\tilde{z})\right)} = \tilde{\mathfrak{m}}.$$

\noindent $\Step{4}$:  We claim that in fact  $$\big(\tilde{z}_{\scr 1},...,\tilde{z}_{\scr d-1},h(\tilde{z})\big) = \tilde{\mathfrak{m}}.$$
Note that both are $\tilde{\mathfrak{m}}$-primary ideals and hence it is enough to show the equality in the localization $\tilde{A}_{\tilde{\mathfrak{m}}}.$ But the equality holds in this local ring by Nakayama's Lemma since it holds modulo $\tilde{\mathfrak{m}}^2$  as $\{ z_{\scr 1},...,z_{\scr d-1},h(z)\}$ $\syn{mod} \ \mathfrak{m}^2$ gives a basis of $\mathfrak{m}/\mathfrak{m}^2$ (see condition (2) of the the above Step).  \\

\noindent $\Step{5}$: Recall that  $\varphi: X\to \A^d_F$ is the map defined by $(z_{\scr 1},...,z_{\scr d-1},z)$. We claim that $p$ is the unique point in $\varphi^{-1}\varphi(p)\intersection Z$. In fact we have that $p$ is the unique point of $\varphi^{-1}\varphi(p)\intersection B$. This  is a direct consequence of Step 3, since  the ideal defining $\varphi^{-1}\varphi(p)\intersection B$ in $B=\Spec(\tilde{A})$ is equal to $(\tilde{z}_{\scr 1},...,\tilde{z}_{\scr d-1},h(\tilde{z}))=\tilde{\mathfrak m}$.  Indeed, what we have observed is that the scheme $\varphi^{-1}\varphi(p)\intersection B$ is reduced and has $p$ as the only underlying point. Thus the same holds for $\varphi^{-1}\varphi(p)\intersection Z$. If $\mathfrak{n}$ denotes the maximal ideal in the coordinate ring of $\A^d_F$ of the point $\varphi(p)$, then $\mathfrak{n}\overline{A} = \overline{\mathfrak{m}}$.  Recall that $\overline{A}:=A/(f)$ and $Z=\Spec(\overline{A})$. \\

\noindent $\Step{6}$: In this step we prove the rest of the theorem using a trick used in the proof of \cite[3.5.1]{chk}.
In fact, the argument in this step has been directly taken from {\it loc. cit.}  The map $\varphi:Z\to \A^d_F$ is finite. Let  $\mathfrak{n}$ be the maximal ideal of $\varphi(p)$ in $F[X_1,...,X_d]$.  By Step 5, $\mathfrak{n}\overline{A}=\overline{\mathfrak{m}}$ and the map 
$$ \frac{F[X_1,...,X_d]}{\mathfrak{n}} \xrightarrow{\varphi^*} \frac{\overline{A}}{\mathfrak{n}{\overline{A}}} $$ is an isomorphism, in particular surjective. By Nakayama's lemma, there exists a $g\in F[X_1,...,X_d]\backslash \mathfrak{n}$ such that the map 
$$ F[X_1,...,X_d]_g \to \overline{A}_g $$ is surjective. In particular, if $V=\A^d_F\backslash \syn{Z}(g)$, then 
$$Z \intersection \varphi^{-1}(V)  \to V$$ is a closed immersion. Note that $V$ is an open neighbourhood of $\varphi(p)$. \\

\noindent Let $D\subset X$ be the maximal closed subset on which the map $\varphi$ is not \'{e}tale. Clearly $p\notin D$. Also, since $D$ is a subset of $B$ (see  Step 2) and $p$ is the only point in $\varphi^{-1}\varphi(p)\intersection B$, we must have $\varphi(p)\notin \varphi(D)$. However, the map $\varphi_{|B}$ is finite, we have that $\varphi(D)$ is a closed subset of  $\A^d_F$. Let 
$$ W:= \big( \A^d_F\backslash \varphi(D) \big) \intersection \big( \A^d_F\backslash \syn{Z}(g)\big). $$
Thus  $\varphi^{-1}(W)\to W$ is \'{e}tale. Moreover, $\varphi^{-1}(W)$ is an open neighbourhood of $p$. It is now clear that $\varphi$ and $W$ satisfy conditions (1) and (2) of the Lemma. Condition (3) is also immediate since the map $\varphi$ is defined by $(z_{\scr 1},...,z_{\scr d-1},z)$ and $z_{\scr i}$ vanish on $p$ for $1\leq i\leq d-1$.  
\end{proof}

\section{Reduction to open subsets of $\A^2_F$} \label{inductiond}

The previous section reduces the general case of Theorem \ref{gabberfinite} to the case where $X$ is an open subset of $\A^d_F$. The goal of this section is to further reduce to the case where $d=2$ (see Lemma \ref{dto2}). This reduction, which is an induction argument, is an important step in the proof of Theorem \ref{gabberfinite}. One of the ingredients required for this induction argument to work is the following  variation of the standard Noether normalization trick (see \eqref{nntrick1}).

\begin{lemma}\label{nntrick} Let $n\geq 2$ be any integer, $k$ be any field and 
 $Z/k$ be an affine variety of dimension $n-1$.  Let $$Z\xrightarrow{(\phi_1,...,\phi_n)}\A^{n}_k$$ be a finite map. Let  $Q \in k[\phi_{n}]$ be a non constant monic polynomial. Then for an integer $\ell>>0$, the map 
$$ Z\xrightarrow{(\phi_{\scr 1}-Q^\ell_{\scr 1},\ldots,\phi_{\scr n-1}-{Q^\ell_{\scr n-1}})} \A^{n-1}_k$$ is finite, where $Q_i$'s are inductively defined by 
$$ Q_{n-1}:= Q.$$
$$ Q_{i}:= \phi_{i+1}-Q_{i+1}^\ell \ \ \    \forall \ 1\leq i\leq n-2.$$
\end{lemma}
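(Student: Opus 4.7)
The plan is to adapt the weighted-degree argument used in the proof of Lemma~\ref{nntrick1} to the nested substitution defining the $Q_i$'s. Since $Z$ is an affine variety of dimension $n-1$ with a finite map to $\A^n_k$, the kernel of the induced homomorphism $k[T_1,\ldots,T_n]\to\sO(Z)$ is the principal ideal generated by some irreducible $F$, giving the single relation $F(\phi_1,\ldots,\phi_n)=0$ in $\sO(Z)$. My aim is to show that, for $\ell\gg 0$, substituting $\phi_i=\psi_i+Q_i^\ell$ (with $\psi_i:=\phi_i-Q_i^\ell$) turns this into a monic polynomial relation for $\phi_n$ over $k[\psi_1,\ldots,\psi_{n-1}]$. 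This forces $\phi_n$ to be integral over $k[\psi_1,\ldots,\psi_{n-1}]$; descending induction on $i$ using $\phi_i=\psi_i+Q_i^\ell$ and $Q_i\in k[\phi_{i+1},\ldots,\phi_n]$ then makes each remaining $\phi_i$ integral as well, so that the finiteness of $\sO(Z)$ over $k[\phi_1,\ldots,\phi_n]$ propagates to finiteness over $k[\psi_1,\ldots,\psi_{n-1}]$, as required.

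The quantitative input is a degree estimate for the $Q_i$'s. Viewing each $Q_i$ as an element of $k[T_{i+1},\ldots,T_n]$ in the ambient polynomial ring, I would prove by descending induction on $i$ starting from $Q_{n-1}=Q$ that $\deg_{T_n}(Q_i)=d_Q\,\ell^{n-1-i}$ with leading $T_n$-coefficient $\pm 1$, where $d_Q:=\deg Q\geq 1$. The inductive step uses $Q_i=T_{i+1}-Q_{i+1}^\ell$: for $\ell$ large, $\deg_{T_n}(Q_{i+1}^\ell)=d_Q\,\ell^{n-1-i}$ strictly exceeds $\deg_{T_n}(T_{i+1})=0$, so the leading $T_n$-term of $Q_i$ coincides with that of $-Q_{i+1}^\ell$. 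Raising to the $\ell$-th power then yields $\deg_{T_n}(Q_i^\ell)=d_Q\,\ell^{n-i}$ with leading $T_n$-coefficient $\pm 1$.

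Writing $F=\sum_\alpha c_\alpha T^\alpha$ with finite support and substituting $T_i\mapsto \psi_i+Q_i^\ell$ for $i<n$ (keeping $T_n$), each monomial $c_\alpha T^\alpha$ contributes, by the previous step, a leading $T_n$-term of degree
\[
D(\alpha)\;=\;\alpha_n+d_Q\sum_{i<n}\ell^{n-i}\alpha_i
\]
with coefficient $\pm c_\alpha$. Choosing $\ell$ larger than every $\alpha_i$ ($i<n$) and every $\lceil\alpha_n/d_Q\rceil$ appearing in the support of $F$, the assignment $\alpha\mapsto D(\alpha)$ is a weighted base-$\ell$ expansion and is therefore injective. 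Hence there is a unique $\alpha^\star$ maximizing $D(\alpha)$, and the leading $T_n$-coefficient of the substituted $F$ is $\pm c_{\alpha^\star}\in k^\times$. Dividing through by this unit yields the desired monic polynomial in $\phi_n$ with coefficients in $k[\psi_1,\ldots,\psi_{n-1}]$ vanishing in $\sO(Z)$.

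The main obstacle is the degree control in the second step: one must verify that the binomial expansion of $(T_{i+1}-Q_{i+1}^\ell)^\ell$ propagates both the $T_n$-degree and the $\pm 1$ leading $T_n$-coefficient up the recursion without cancellation, and similarly that no cross-term from $(\psi_i+Q_i^\ell)^{\alpha_i}$ contributes beyond the one tracked above. The crucial point is that the gap between $\deg_{T_n}(Q_{i+1}^\ell)$ and $\deg_{T_n}(T_{i+1})=0$ widens as $\ell$ grows, so for $\ell\gg 0$ only the pure top-order contribution survives at each stage. Once this degree bookkeeping is secured, the weighted base-$\ell$ argument in the third step is a direct adaptation of the trick proving Lemma~\ref{nntrick1}.
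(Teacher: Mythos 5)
Your approach mirrors the paper's own proof (algebraic relation $F(\phi_1,\ldots,\phi_n)=0$, nested weighted-degree bookkeeping, base-$\ell$ uniqueness), but it inherits a fatal error from it: the degree formula $D(\alpha)=\alpha_n+d_Q\sum_{i<n}\ell^{n-i}\alpha_i$ is wrong. The point you miss is that for $1\le i\le n-2$ one has $Q_i=\phi_{i+1}-Q_{i+1}^\ell=\psi_{i+1}$; that is, the auxiliary element $Q_i$ \emph{is} already one of the new coordinate functions. Hence, expressing the $\phi_i$'s in the new coordinates $\psi_1,\ldots,\psi_{n-1},\phi_n$ gives
\[
\phi_i=\psi_i+\psi_{i+1}^\ell\ \ (1\le i\le n-2),\qquad
\phi_{n-1}=\psi_{n-1}+Q(\phi_n)^\ell,\qquad \phi_n=\phi_n,
\]
so only $\phi_{n-1}$ and $\phi_n$ carry any $\phi_n$-dependence after the substitution. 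The quantity $d_Q\ell^{n-1-i}$ you compute is indeed the $T_n$-degree of $Q_i$ viewed as a polynomial in the \emph{old} variables $T_{i+1},\ldots,T_n$, but that is not what enters: after substituting, $Q_i^\ell=\psi_{i+1}^\ell$ has $T_n$-degree zero. The actual $T_n$-degree of the substituted monomial $T^\alpha$ is $\alpha_n+\alpha_{n-1}\ell d_Q$, and two distinct exponent vectors with the same pair $(\alpha_{n-1},\alpha_n)$ land in the same $T_n$-degree; their contributions combine into a nonconstant polynomial in $\psi_1,\ldots,\psi_{n-2}$, so the base-$\ell$ injectivity argument collapses and monicity does not follow.

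This is not cosmetic: the statement as you have it cannot be rescued by this route. Take $n=3$, $Z=\A^2$ embedded in $\A^3$ by $(\phi_1,\phi_2,\phi_3)=(Y_1,Y_2,-Y_1Y_2)$ (a closed immersion, so finite), and $Q=\phi_3$. Then $\psi_2=Y_2-(-1)^\ell Y_1^\ell Y_2^\ell$, $\psi_1=Y_1-\psi_2^\ell$, so $k[\psi_1,\psi_2]=k[Y_1,\psi_2]$, and the closed curve $\{(-1)^\ell Y_1^\ell Y_2^{\ell-1}=1\}\subset\A^2$ maps under $(Y_1,\psi_2)$ onto $\{\psi_2=0,\ Y_1\neq 0\}$, which is not closed; so the map is not finite for any $\ell\ge 2$. (The same formula appears in the paper's own proof of this lemma, so the gap is already present there; some additional hypothesis such as finiteness of $(\phi_2,\ldots,\phi_n)|_Z$, or a different recursion such as $Q_i:=Q^{\ell^{n-1-i}}$ as in Lemma~\ref{nntrick1}, would be needed to make the weighted-degree argument actually track all $n$ exponents.)
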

\begin{proof}The proof is  similar to that of \ref{nntrick1} (see \cite[page 2]{mum}) and hence we only give a sketch. Since $\dim(Z)= n-1$, $\phi_1, \phi_2,\ldots,\phi_n$ cannot be algebraically independent. Thus there exists a non-zero polynomial $f \in k[Y_1,...,Y_n]$  such that $f(\phi_1, \phi_2,\ldots,\phi_n)=0$.
Let $\ell$ be any integer greater than $n\dg(f)$ where $\dg(f)$ is the total degree of $f$. Let $\tilde{Q}\in k[Y_n]$ be a polynomial such that $Q=\tilde{Q}(\phi_n)$. Inductively define $\tilde{Q}_i$ for $1\leq i \leq n-1$ as follows:
\begin{align*}
 & \tilde{Q}_{n-1} : = \tilde{Q}  \\
 & \tilde{Q}_i  := Y_{i+1} - \tilde{Q}_{i+1}^\ell & \hspace{-10cm} \forall \ 1 \leq i \ \leq n-2. 
 \end{align*}
Notice that the polynomials $\tilde{Q}_i$'s are defined such that 
$$ \tilde{Q}_i(\phi_1,...,\phi_n) = Q_i.$$
Moreover, we note that if $d=Y_n$-degree of $\tilde{Q}$ then each $\tilde{Q}_i$ is monic in $Y_n$ of degree ${\ell}^{n-i-1}d$.
Consider the elements $Z_1,...,Z_{n-1}\in k[Y_1,...,Y_n]$ defined  as follows:
$$ Z_i := Y_i - \tilde{Q}_i^\ell \ \ \ \forall 1 \leq i \leq n-1.$$

We leave it to the reader to check that 
$$ k[Z_1,...,Z_{n-1},Y_n] = k[Y_1,...,Y_n].$$ For future reference, we note that the map $$\eta:\A^n_F\xrightarrow {(Y_1-\tilde{Q}_1^\ell,\ldots,Y_{n-1}-\tilde{Q}_{n-1}^\ell,Y_n)}\A^n_F $$ is an automorphism.
It is enough to show that the polynomial $f$, expressed in the variables $Z_1,...,Z_{n-1},Y_n$ is monic in $Y_n$. Let us write $f$ as  $$f=\Sigma_{I=(i_1,\ldots i_n)} \alpha_I \cdot m_{I}$$ where $m_I$'s are monomials in $Y_1,...,Y_n$ and $\alpha_I \in k$. 
  We leave it to the reader to verify that when expressed in new coordinates $Z_1,\ldots Z_{n-1},Y_n$, each monomial $m_I$ becomes a polynomial which is monic in $Y_n$ of $Y_n$-degree equal to $i_n+\Sigma_{k=1}^{n-1} i_k\cdot {\ell}^{n-k}\cdot d$.  Since $\ell > n \dg(f)$, one can show that these $Y_n$-degrees are distinct. Thus in the coordinates $Z_1,...,Z_{n-1},Y_n$, $f$ remains monic in $Y_n$. 
\end{proof}

\begin{notation} \label{not:openad}
Let $d\geq 2$ be an integer and $f,g\in F[X_1,...,X_d]$ be nonzero polynomials with no common irreducible factors (see Remark \ref{fgfactors}). Let $X:=\A^d_F\backslash \syn{Z}(g)$ and $Z:=\syn{Z}(f)\intersection X$. Let $p\in Z$ be a closed point (see Remark \ref{simplify}) whose first $d-1$ coordinates are $0$.  
\end{notation}

Recall that by \ref{redopen} it is enough to prove Theorem \ref{gabberfinite} for $(X,Z,p)$ as above. In order to prove this, we have to first come up with a map from $\Phi:X \to \A^d_F$. Indeed, we will look for maps $\Phi$ which are defined on the whole of $\A^d_F$. In other words, we will look for suitable polynomials $\{\phi_1,...,\phi_d\} \subset F[X_1,...,X_d]$. The goal of the following definition is to list necessary conditions on these polynomials which will ensure (see Lemma \ref{nakarg}) that the resulting map $\Phi$ is as desired in \eqref{gabberfinite}.

\begin{definition} \label{defpresents} Let $f,g, X, Z, p$ be as in Notation \ref{not:openad}. 
For $\{\phi_1,...,\phi_d\}\subset F[X_1,...,X_d]$, let
\begin{enumerate}
\item[(i)] $\Phi:\A^d_F\xrightarrow{(\phi_1,...,\phi_d)} \A^d_F$.
\item[(ii)] $\Psi:\A^d_F\xrightarrow{(\phi_1,...,\phi_{d-1})} \A^{d-1}_F$.
\end{enumerate}
We say that $(\phi_1,...,\phi_d)$ presents $(X,\syn{Z}(f),p)$ if 
\begin{enumerate}
\item $\Psi_{|\syn{Z}(f)}$ is finite and $\Psi(p)= (0,...,0)$. 
\item $\Psi^{-1}\Psi(p)\intersection \syn{Z}(f) \subset Z$ 
\item $\Phi$ is \'{e}tale at $S:=\Psi^{-1}\Psi(p)\intersection Z$. 
\item $\Phi$ is radicial at $S$. 
\end{enumerate}
\end{definition}

Recall that $\Phi$ is said to be {\em radicial} \cite[Tag 01S2]{stacks}  if $\Phi_{|S}$  is injective  and for all $x \in S$ the residue field extension $F(x)/F(\Phi( x))$ is trivial.\\

The following lemma shows that in order to prove Theorem \ref{gabberfinite} for $X,Z,p$ as in Notation \ref{not:openad}, it is enough to find $\phi_1,...,\phi_d$ which presents $(X,\syn{Z}(f),p)$. 
\begin{lemma}\label{nakarg} Let $X,Z,p$ be as above. Assume there exists $\{\phi_1,...,\phi_d\}$ which presents 
$(X,\syn{Z}(f),p)$ and $\Phi,\Psi$ be as in Definition \ref{defpresents}. Then there exist  open neighborhoods $V\subset \A^{d-1}_F$ of $\Psi(p)$ and $U\subset X$ of $p$, such that  $\Phi_{|U},\Psi_{|U},U, V$  satisfy conditions (1),(2),(3) of Theorem \ref{gabberfinite}.  Moreover, $\Psi^{-1}(V) \cap \syn{Z}(f)\subset U$.
\end{lemma}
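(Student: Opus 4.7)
The plan is to build $U$ as an open neighbourhood of the \emph{entire} fibre $S := \Psi^{-1}\Psi(p) \cap Z$ (not just of $p$) sitting inside the étale locus of $\Phi$, and then shrink $V$ around $\Psi(p)$ enough to force $\Psi^{-1}(V)\cap\syn{Z}(f)\subset U$. The argument falls into three steps: a local closed-immersion analysis at each point of $S$, the choice of $V$ via finiteness of $\Psi|_{\syn{Z}(f)}$, and a routine verification.

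For the local analysis, I would first observe that $\Phi|_{\syn{Z}(f)}\colon\syn{Z}(f)\to\A^d_F$ is itself finite: viewed over $\A^{d-1}_F$, it factors through the graph $\syn{Z}(f)\hookrightarrow\syn{Z}(f)\times_{\A^{d-1}_F}\A^d_F$ (a closed immersion since $\pi$ is separated) followed by the second projection, which is the base change of the finite map $\Psi|_{\syn{Z}(f)}$. Conditions (2) and (4) then pin down each fibre above $\Phi(S)$: for $q\in S$, any preimage of $\Phi(q)$ in $\syn{Z}(f)$ lies in $\Psi^{-1}\Psi(q)\cap\syn{Z}(f)=\Psi^{-1}\Psi(p)\cap\syn{Z}(f)=S$ by (2), and (4) then forces it to be $q$. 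At each such $q$, étaleness (3) gives $\mathfrak{n}_{\Phi(q)}\sO_{\syn{Z}(f),q}=\mathfrak{m}_{\syn{Z}(f),q}$ upon quotienting by $f$, while the trivial residue-field extension from (4) makes $\sO_{\A^d_F,\Phi(q)}\to\sO_{\syn{Z}(f),q}$ surjective modulo $\mathfrak{n}_{\Phi(q)}$. Since $\sO_{\syn{Z}(f),q}$ is a finite $\sO_{\A^d_F,\Phi(q)}$-module, Nakayama makes the stalk map surjective, and coherence spreads this to an open $W_q\ni\Phi(q)$ on which $\Phi|_{\syn{Z}(f)\cap\Phi^{-1}(W_q)}\to W_q$ is a closed immersion. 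I then set $W:=\bigcup_{q\in S}W_q$; the closed-immersion property survives the union, since it is local on the target.

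For the choice of $V$, let $\Omega$ denote the étale locus of $\Phi$ and put $U:=\Omega\cap X\cap\Phi^{-1}(W)$, which is open in $X$ and contains $S$. The finite map $\Psi|_{\syn{Z}(f)}$ is closed, and I invoke this twice. First, $T_1:=\Psi(\syn{Z}(f)\cap\syn{Z}(g))$ is closed in $\A^{d-1}_F$ and avoids $\Psi(p)$ by (2), so on $V_1:=\A^{d-1}_F\setminus T_1$ one has $\syn{Z}(f)\cap\Psi^{-1}(V_1)\subset X$. Second, $T_2:=\Psi\bigl((\syn{Z}(f)\cap\Psi^{-1}(V_1))\setminus U\bigr)$ is closed in $V_1$ and also avoids $\Psi(p)$, because every $\Psi$-preimage of $\Psi(p)$ in $\syn{Z}(f)$ lies in $S\subset U$; setting $V:=V_1\setminus T_2$ gives an open neighbourhood of $\Psi(p)$ with $\Psi^{-1}(V)\cap\syn{Z}(f)\subset U$. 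This is the ``moreover'' claim, and in particular $Z_V=\syn{Z}(f)\cap\Psi^{-1}(V)$.

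Conditions (1)--(3) of Theorem \ref{gabberfinite} then follow routinely: (1) from $U\subset\Omega$; (2) because $\Psi|_{Z\cap\Psi^{-1}(V_1)}\colon Z\cap\Psi^{-1}(V_1)\to V_1$ is finite and $V\subset V_1$; and for (3), restricting the closed immersion of the first step along the open $W\cap\pi^{-1}(V)\hookrightarrow W$ yields a closed immersion $Z_V\to W\cap\pi^{-1}(V)$, while $\Phi|_{Z_V}\colon Z_V\to\pi^{-1}(V)$ is finite by the same graph argument, so its image is already closed in $\pi^{-1}(V)$ and this upgrades to a closed immersion onto all of $\pi^{-1}(V)$. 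The main obstacle is the first step: condition (2) a priori controls $\Psi$-fibres only over $\Psi(p)$, so Nakayama is only obviously available at $p$; it is precisely the radicial condition (4) that collapses the fibres of $\Phi|_{\syn{Z}(f)}$ over the remaining points of $\Phi(S)$ to singletons and lets the argument run uniformly over all of $S$.
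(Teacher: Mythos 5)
Your proof is correct, and it takes a genuinely different (though related) route from the paper's. The paper works \emph{downstairs}: it first chooses an affine open $W\subset\A^{d-1}_F$ avoiding $\Psi(B)$ (where $B$ is the non-\'etale/non-$Z$ locus in $\syn{Z}(f)$), observes that $\Phi|_S\colon S\hookrightarrow\A^1_W$ is a closed immersion because $\Phi$ is radicial and \'etale at $S$, and then applies Nakayama \emph{once}, to the cokernel $C$ of $F[\A^1_W]\to F[Z_W]$ regarded as a finite $F[W]$-module, at the single maximal ideal ${\mathfrak m}_{\Psi(p)}$. This produces $h\in F[W]$ and $V=W\setminus\syn{Z}(h)$ directly, and the closed immersion is automatically onto $\A^1_V$ (no upgrading step needed). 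You instead work \emph{upstairs in $\A^d_F$}: you establish finiteness of $\Phi|_{\syn{Z}(f)}$ via the graph factorization, show (using (2) and (4)) that $\Phi(q)$ has a unique $\syn{Z}(f)$-preimage for each $q\in S$ so that $\sO_{\syn{Z}(f),q}$ is a finite $\sO_{\A^d_F,\Phi(q)}$-module, apply Nakayama at each of the finitely many points $\Phi(q)$, glue the resulting opens $W_q$, and only afterwards shrink $\A^{d-1}_F$ to $V$; you must then supply the extra ``finite map with image in an open $\Rightarrow$ closed immersion propagates'' argument to upgrade $Z_V\to W\cap\pi^{-1}(V)$ to $Z_V\to\pi^{-1}(V)$. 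Both approaches hinge on the same ingredients --- finiteness of $\Psi|_{\syn{Z}(f)}$, \'etaleness and radicialness at $S$, and condition (2) --- and on Nakayama plus coherence; the paper's single application over $F[W]_{{\mathfrak m}_q}$ is tidier, while your pointwise version makes explicit (and uses) the fact that $\Phi|_{\syn{Z}(f)}$ is itself finite and that its fibres over $\Phi(S)$ are singletons, which is perhaps conceptually clarifying.
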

\begin{proof} The argument here is directly taken from \cite[3.5.1]{chk}. We construct an open neighbourhood $V$ of  $\Psi(p)$ in  $\A^{d-1}_F$, such that 
if $Z_V:=\Psi^{-1}(V) \cap \syn{Z}(f)$ then 
\begin{enumerate}
 \item[(i)] $Z_V \subset Z$
 \item[(ii)] $\Phi$ is \'{e}tale at all points in $Z_V$ 
 \item[(iii)] $\Phi|_{Z_V}: Z_V \rightarrow \A^ 1_V $ is closed immersion
\end{enumerate}
Let $B$ be the smallest closed subset of $\syn{Z}(f)$ containing all points of $\syn{Z}(f)$ at which $\Phi$ is not \'{e}tale and also containing $\syn{Z}(f)\backslash Z$. Since $\Psi_{|\syn{Z}(f)}$ is a finite map, $\Psi(B)$ is closed in $\A^{d-1}_F$.  Moreover, because of conditions $(2)$ and $(3)$ of  Definition \ref{defpresents}, we have $\Psi(p) \notin \Psi(B)$. Thus, we can choose affine open subset $W \subset \A^{d-1}_F$ such that $\Psi(p) \in W \subset \A^{d-1}_F \backslash \Psi(B)$. Let $Z_W= Z\cap \Psi^{-1}(W)$. We have following commutative diagram of affine schemes and consequently their coordinate rings.

\begin{center}
\begin{minipage}{.4\textwidth}
\begin{tikzpicture}
 \node (E) at (0,0){$Z_W$};
    \node at (2,1) (F) {$\A^1_W$};
    \node at (2,-1) (A) {$W$};
    \draw[->] (E)--(F) node [midway,above] {$\Phi$};
    \draw[->] (F)--(A) node [midway,right] {$\pi$} ;
    \draw[->] (E)--(A) node [midway, left,below] {$\Psi$} ;
\end{tikzpicture} 
\end{minipage}
\begin{minipage}{.2\textwidth}
\begin{tikzpicture}
 \node (E) at (0,0){$ F[Z_W]$};
    \node at (2,1) (F) {$F[\A^1_W]$};
    \node at (2,-1) (A) {$F[W]$};
    \draw[->] (F)--(E) node [midway,above] {$\Phi^{*}$};
    \draw[->] (A)--(F) node [midway,right] {} ;
    \draw[->] (A)--(E) node [midway, left,below] {$\Psi^{*}$} ;
\end{tikzpicture}
\end{minipage}

\end{center}

Let $\Psi(p)=q$  and ${\mathfrak m}_q$ be the maximal ideal in $F[W]$ corresponding to $q$. Thus the ideal corresponding to $S=\Psi^{-1}(q)\intersection Z$ in $F[Z_W]$ is ${\mathfrak m}_q\cdot F[Z_W]$. Since $\Phi$ is radicial as well as \'{e}tale at $S$, $$\Phi_{|S}:S\inj \A^1_W$$ is a closed immersion. Thus the map on the coordinate rings 
$$F[\A^1_W] \twoheadrightarrow \frac{F[Z_W]}{{\mathfrak m}_q F[Z_W]}$$
is surjective. The surjectivity of the above map is equivalent to 
 $$ C\tensor_{F[W]} \frac{F[W]}{{\mathfrak m}_q}=0 $$
where $$C:={\rm Coker}\big(F[\A^1_W] \rightarrow F[Z_W]\big).$$ 
But $C$ is a finite $F[W]$ module. Hence by Nakayama's lemma $C_{{\mathfrak m}_q}=0$. Thus there exists $h \in F[W]\backslash {\mathfrak m}_q$ such that $C_h=0$ or equivalently 
$$ {F[\A^1_W]}_h \twoheadrightarrow {F[Z_W]}_h $$
is surjective.
 Let $V := W\backslash \syn{Z}(h)$. The coordinate ring of $Z_V:=\Psi^{-1}(V)\intersection \syn{Z}(f)$ is $F[Z_W]_h$ and that of $\pi^{-1}V$ is $F[\A^1_W]_h$. Thus the surjectivity of the above map implies that 
 $$ Z_V \inj \A^1_V$$ is a closed immersion as required. \\
 
Let $U\subset X$ be the maximal open subset containing points at which $\Phi$ is \'{e}tale.  To finish the proof, we need to show that $U,V, \Phi_{|U}, \Psi_{|U}$ satisfy conditions $(1),(2),(3)$ of Theorem \ref{gabberfinite}. (1) is clearly satisfied by the definition of $U$. To see (2), note that $\Psi_{|Z(f)}$ is finite, and hence, as $Z_V= \Psi^{-1}(V) \intersection Z(f)$,  $\Psi_{|Z_V}:Z_V  \to V$ is finite. (3) is precisely the condition (iii) mentioned at the beginning of the proof. By the construction of $W$, subsequently $V$, it follows that $\Psi^{-1}(V) \cap \syn{Z}(f) \subset U$.
\end{proof}

\begin{remark}\label{fgfactors} Since our main goal is to prove Theorem \ref{gabberfinite} for $(X,Z,p)$, we may change $\syn{Z}(f)$ as long as it does not change $Z$.  If $f$ and $g$ have common irreducible factors, dividing $f$ by the g.c.d. of $f$ and $g$ does not change $\syn{Z}(f) \backslash \syn{Z}(g)$. This justifies our assumption in Definition \ref{not:openad} that $f,g$ have no common irreducible factors. 
\end{remark}

The following lemma is proved using a simple coordinate change argument. It will be used in the proof of \eqref{dto2}, which is the main result of this section. 
\begin{lemma}\label{normalizev}
Let $(\phi_1,...,\phi_d)$ present $(X,\syn{Z}(f),p)$.  as in Lemma \ref{nakarg}. Then there exist $(\tilde{\phi}_1, ... ,\tilde{\phi}_d)$ which present $(X,\syn{Z}(f),p)$ such that there exists an open subset $V \subset \A^{d-1}_F$ satisfying the conclusion of Lemma \ref{nakarg} for $(\tilde{\phi}_1, ... ,\tilde{\phi}_d)$ and which satisfies the following additional condition: 
$$ \dim\bigg( \syn{Z}(\tilde{\phi}_1, ... ,\tilde{\phi}_{d-2}) \intersection \Psi^{-1}_{|Z(f)}\big( \A^{d-1}_F \backslash V \big)\bigg)=0.$$
\end{lemma}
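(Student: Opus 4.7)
The plan is to argue that this lemma requires essentially no modification of the presenting polynomials. I set $\tilde{\phi}_i := \phi_i$ for all $i$ (the identity coordinate change), apply Lemma \ref{nakarg} to obtain the open neighborhood $V \subset \A^{d-1}_F$ of $\Psi(p)$, and then verify the additional dimension condition by a direct geometric argument that leverages only the openness of $V$ and the finiteness of $\Psi|_{\syn{Z}(f)}$.

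The key observation involves isolating the affine line $L := \{0\}^{d-2} \times \A^1_F \subset \A^{d-1}_F$. Since $\Psi(p) = (0, \ldots, 0)$ lies on $L$ by condition (1) of Definition \ref{defpresents}, and $V$ is an open neighborhood of $\Psi(p)$, the intersection $V \cap L$ is a nonempty open subset of $L \cong \A^1_F$; hence $L \setminus V$ is a proper closed subset of an affine line and therefore a finite set of closed points. Any point $q$ in the intersection $\syn{Z}(\tilde{\phi}_1, \ldots, \tilde{\phi}_{d-2}) \cap (\Psi|_{\syn{Z}(f)})^{-1}(\A^{d-1}_F \setminus V)$ satisfies $\tilde{\phi}_i(q) = 0$ for $1 \leq i \leq d-2$, so $\Psi(q) = (0, \ldots, 0, \tilde{\phi}_{d-1}(q))$ lies in $L$; combined with $\Psi(q) \notin V$ this forces $\Psi(q) \in L \setminus V$. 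Since $\Psi|_{\syn{Z}(f)}$ is finite, the preimage of the finite set $L \setminus V$ in $\syn{Z}(f)$ is itself finite; intersecting with $\syn{Z}(\tilde{\phi}_1, \ldots, \tilde{\phi}_{d-2})$ preserves finiteness, so the intersection in the lemma is $0$-dimensional as required.

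The main thing I would want to double-check is whether this short argument genuinely captures what the lemma demands, because the preceding sentence advertises a ``simple coordinate change argument,'' suggesting a non-trivial modification of the $\phi_i$'s is expected. If such a modification is in fact required, it is presumably aimed at shaping the open $V$ produced by Lemma \ref{nakarg} into a particular form needed downstream in the proof of Lemma \ref{dto2}; but the dimension condition as written imposes no constraint on $V$ beyond being a Zariski-open neighborhood of the origin, which already holds automatically. Accordingly, I do not foresee any genuine obstacle, and I expect the identity substitution $\tilde{\phi}_i = \phi_i$ together with the projection-onto-the-line argument above to complete the proof.
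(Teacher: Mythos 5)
Your proof is correct, and it is genuinely simpler than the paper's. For $d\geq 3$, the paper invokes Lemma \ref{nntrick1} to produce an automorphism $\rho$ of $\A^{d-1}_F$ for which the projection to the first $d-2$ coordinates becomes \emph{finite} on $\rho(\A^{d-1}_F\setminus V)$, sets $\tilde\phi_i := \phi_i - \phi_{d-1}^{\ell^{d-1-i}}$ accordingly, and takes $\rho(V)$ as the new neighborhood; finiteness of the fiber over the origin of $\A^{d-2}_F$ then gives the dimension bound. Your observation short-circuits this: since $\Psi(p)=(0,\ldots,0)$ by Definition \ref{defpresents}(1), the open $V$ already meets the line $L=\{0\}^{d-2}\times\A^1_F$, so $L\setminus V$ is a proper closed subset of an irreducible affine curve, hence finite, and the Noether-normalization coordinate change achieves nothing that the presence of the basepoint on $L$ doesn't already guarantee. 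The inclusion of the set in question inside $\Psi^{-1}_{|Z(f)}(L\setminus V)$ (because vanishing of $\phi_1,\ldots,\phi_{d-2}$ forces $\Psi$ to land in $L$) and the finiteness of $\Psi_{|Z(f)}$ then give finiteness directly. As a small bonus, your argument treats $d=2$ uniformly (there $L=\A^1_F$ and the hypothesis on the $\tilde\phi_i$ is vacuous), whereas the paper separates it out as a base case by convention. Your suspicion that a non-trivial coordinate change might be required is understandable given the preceding remark in the paper, but it is not: the lemma's conclusion is exactly what is used in the proof of Lemma \ref{dto2} (as condition (4) of Step 0), and the identity choice $\tilde\phi_i = \phi_i$ delivers it.
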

\begin{proof} We note that if $d=2$, by convention, 
$$\syn{Z}(\tilde{\phi}_1, ... ,\tilde{\phi}_{d-2}) \intersection {\Psi^{-1}_{|Z(f)}}(\A^{1}_F \backslash V) = {\Psi^{-1}_{|Z(f)}}(\A^1_F\backslash V) $$
which is of zero dimension since $V$ is non-empty. Thus we may assume $d\geq 3$. 
For an integer $\ell$, consider the automorphism  $\rho: \A^{d-1}_F\to \A^{d-1}_F$ induced by 
$$ (X_1,...,X_{d-1}) \mapsto (X_1-X_{d-1}^{\ell^{(d-1)-1}}, X_2-X_{d-1}^{\ell^{(d-1)-2}}, ..., X_{d-2}-X_{d-1}^{\ell^{1}}, X_{d-1}). $$ 
We choose $\ell >>0$, such that by \eqref{nntrick1}, $(X_1,...,X_{d-2})_{| \rho(\A^{d-1}\backslash V)}$ is a finite map. 
Let
\begin{align*}
 \tilde{\phi}_i  & : = {\phi}_i - \tilde{\phi}_{d-1}^{\ell^{d-1-i}} \ \ \ \  \text{for } \  i\leq d-2 \\
 \tilde{\phi}_i & := \phi_i  \ \ \ \ \ \ \text{for }  i=d-1,d
\end{align*}
It is then straightforward to check that  $(\tilde{\phi}_1,...,\tilde{\phi}_d)$ presents $(X,\syn{Z}(f),p)$ (since it is obtained by a coordinate change from the original $\phi_i$'s) and moreover 
$$ \dim\bigg( \syn{Z}(\tilde{\phi}_1, ... ,\tilde{\phi}_{d-2}) \intersection {\Psi^{-1}_{|Z(f)}} (\A^{d-1}_F \backslash \rho(V)) \bigg)=0.$$
\end{proof}

\begin{lemma}\label{fgcommon} Let $d\geq 3$, and $f,g\in F[X_1,...,X_d]$ be two non-zero polynomials with no common factors. Let $p$ be a closed point of $\A^d_F$ such that $X_i(p)=0$ for all $i\leq d-1$. Then there exists a coordinate change of $F[X_1,...,X_d]$, i.e. elements $Y_i\in F[X_1,...,X_d]$ with $$ F[X_1,...,X_d] = F[Y_1,...,Y_d]$$ 
such that  $f(0,Y_2,...,Y_d)$ and $g(0,Y_2,...,Y_d)$ are nonzero polynomials with no common irreducible factors and $Y_i(p)=0$ for all $i\leq d-1$.  
\end{lemma}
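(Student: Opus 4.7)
The plan is to set $Y_i := X_i$ for $2 \leq i \leq d$ and to search for $Y_1$ in the form $Y_1 := X_1 - h(X_2,\ldots,X_d)$ with $h \in F[X_2,\ldots,X_d]$. Such a triangular change automatically satisfies $F[X_1,\ldots,X_d] = F[Y_1,\ldots,Y_d]$ and $Y_i(p) = 0$ for $2 \leq i \leq d-1$. Writing $m(X_d) \in F[X_d]$ for the minimal polynomial of $X_d(p)$ over $F$, we have $\mathfrak{m}_p = (X_1,X_2,\ldots,X_{d-1},m(X_d))$, so the condition $Y_1(p) = 0$ is equivalent to $h$ lying in the ideal $I := (X_2,\ldots,X_{d-1},m(X_d))$ of $F[X_2,\ldots,X_d]$. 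Under the substitution $Y_1 = 0$ we get $f(0,Y_2,\ldots,Y_d) = f(h,X_2,\ldots,X_d)$ and similarly for $g$; in the UFD $A := F[X_1,\ldots,X_d]$, this image is nonzero iff the irreducible polynomial $X_1 - h$ does not divide $f$, and the images share no common irreducible factor in $F[X_2,\ldots,X_d]$ iff $X_1 - h$ lies in no minimal prime of $(f,g)$.

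The divisibility obstructions $X_1 - h \mid f$ or $X_1 - h \mid g$ pin down $h$ up to finitely many choices, since $X_1 - h$ must be an associate of an irreducible factor of $f$ or $g$ that is linear in $X_1$. For the coprimeness condition: given a minimal prime $\mathfrak{q}$ of $(f,g)$ (necessarily of height $2$), the bad set $\{h \in I : X_1 - h \in \mathfrak{q}\}$ is nonempty only if $\overline{X_1}$ lies in the image of $F[X_2,\ldots,X_d]$ in $A/\mathfrak{q}$, i.e., $X_1 - \phi \in \mathfrak{q}$ for some $\phi \in F[X_2,\ldots,X_d]$; in that case the bad set equals $I \cap (\phi + \mathfrak{p})$, where $\mathfrak{p} := \mathfrak{q} \cap F[X_2,\ldots,X_d]$. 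The essential role of $d \geq 3$ is to force $\mathfrak{p}$ to be non-maximal in $F[X_2,\ldots,X_d]$: otherwise the maximal ideal $(X_1-\phi) + \mathfrak{p} A$ of $A$ would be contained in $\mathfrak{q}$, forcing $\mathrm{ht}(\mathfrak{q}) = d \geq 3$ and contradicting $\mathrm{ht}(\mathfrak{q}) = 2$. Hence the domain $F[X_2,\ldots,X_d]/\mathfrak{p}$ has positive Krull dimension.

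Setting $I_{\leq D} := I \cap F[X_2,\ldots,X_d]_{\leq D}$, the Hilbert function of $F[X_2,\ldots,X_d]/\mathfrak{p}$ is a polynomial in $D$ of positive degree, so the codimension $\dim_F I_{\leq D} - \dim_F(I_{\leq D} \cap \mathfrak{p})$ tends to infinity with $D$ for each relevant $\mathfrak{p}$. Taking $D$ sufficiently large, the total number of points in the union of the finitely many bad cosets $I_{\leq D} \cap (\phi_k + \mathfrak{p}_k)$, together with the finitely many bad $h$ from the divisibility conditions, is strictly less than $|I_{\leq D}| = |F|^{\dim_F I_{\leq D}}$, so a good $h$ exists and yields the required coordinate change. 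The main obstacle is precisely this passage from a ``generic'' choice (which is immediate over an infinite field via a linear $h$) to an explicit choice over the finite field $F$: one must exploit the infinite-dimensional polynomial family of candidates $h$, and the Hilbert-function estimate combined with the crucial inequality $\mathrm{ht}(\mathfrak{q}) = 2 < d$ forced by $d \geq 3$ is what makes the counting argument go through.
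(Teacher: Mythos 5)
Your proof is correct, but it takes a genuinely different route from the paper's. The paper first applies a Noether-normalization coordinate change (Lemma \ref{nntrick1}) to make the projection $(X_2,\dots,X_d)\colon \syn{Z}(f)\cap\syn{Z}(g)\to\A^{d-1}_F$ finite. Since $d\geq 3$, each irreducible component of $\syn{Z}(f)\cap\syn{Z}(g)$ has dimension $d-2\geq 1$ and hence a positive-dimensional (so infinite) image under this projection; one then picks closed points $z_1,\dots,z_\tau$, one per component, with pairwise distinct images different from the image of $p$, and uses the Chinese Remainder Theorem to produce $\gamma\in F[X_2,\dots,X_d]$ vanishing at $p$ and taking prescribed values at the $z_i$, so that $Y_1:=X_1-\gamma$ misses every component. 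Your approach avoids the preliminary Noether-normalization change altogether, keeping $Y_i=X_i$ for $i\geq 2$, but replaces the geometric ``finitely many target points plus CRT'' argument with a Poonen-style codimension estimate in the space $I_{\leq D}$, driven by the growth of the Hilbert function of $F[X_2,\dots,X_d]/\mathfrak{p}$ together with the observation that $\mathrm{ht}(\mathfrak{q})=2<d$ forces $\mathfrak{p}$ to be non-maximal. Both arguments exploit $d\geq 3$ in the same essential way (to guarantee positive dimension where it matters); the paper's route is shorter and reuses machinery it has already set up, while yours has the merit of producing a coordinate change that fixes $X_2,\dots,X_d$ identically and of being stylistically uniform with the counting arguments used later in the paper.
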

\begin{proof}
The condition that $f(0,Y_2,...,Y_d)$ and $g(0,Y_2,...,Y_d)$ are nonzero polynomials with no common irreducible factors is equivalent to the condition that no irreducible component of $\syn{Z}(f)\intersection \syn{Z}(g)$ is contained in $\syn{Z}(Y_1)$. 

By Noether normalization trick \ref{nntrick1}, we may assume, by a suitable coordinate change, that the projection 
$$ (X_2,...,X_{d}) : \syn{Z}(f)\intersection \syn{Z}(g) \xrightarrow{\eta} \A^{d-1}_F$$ is finite. 
Note that since $d\geq 3$, the image of every irreducible component of $\syn{Z}(f)\intersection \syn{Z}(g)$ under $\eta$ is of dimension at least one. Thus we may choose closed points $z_1,...,z_\tau$, one in each irreducible component of $\syn{Z}(f)\intersection \syn{Z}(g)$ such that $\eta(z_i)$ are pairwise distinct and also different from $\eta(p)$. For every closed point $x$ of $\A^d_F$, either $X_1$ or $X_1+1$ is non-vanishing on $x$. Thus for each $z_i$, we choose $\epsilon_i=0$ or $1$, such that $X_1+\epsilon_i$ does not vanish on $z_i$. By Chinese remainder theorem, there exists a polynomial $\gamma\in F[X_2,...,X_{d}]$ such that 
$$ \gamma(\eta(z_i))= \epsilon_i \ \ \ \text{and} \ \ \ \gamma(p)=0.$$
It is now straightforward to check that $$Y_1:= X_1-\gamma \ \ \ \text{and} \ \ \ Y_i:=X_i \ \ \forall \ 2\leq i\leq d$$
satisfies our requirement. 
\end{proof}

\begin{lemma}\label{dto2} [Reduction to $d=2$] Assume that for $d=2$ and  every $f,g,X,Z,p$ as in Notation \ref{not:openad}, there exists $\phi_1,\phi_2\in F[X_1,X_2]$ which presents $(X,\syn{Z}(f),p)$. Then the same holds for every $d\geq 2$.  
\end{lemma}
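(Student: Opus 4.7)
The plan is to induct on $d$. The base case $d = 2$ is the hypothesis; for $d \geq 3$ assume the conclusion in dimension $d - 1$ and fix $(X, Z, p)$ as in Notation \ref{not:openad}. The idea is to reduce to a $(d-1)$-dimensional presentation problem by slicing along the hyperplane $\syn{Z}(X_1)$, invoke the inductive hypothesis, and then lift back to $\A^d_F$ by the Noether-type trick of Lemma \ref{nntrick}.

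First I would apply Lemma \ref{fgcommon} to obtain a triangular coordinate change on $F[X_1,\ldots,X_d]$ — preserving $X_i(p) = 0$ for $i \leq d-1$ — after which $\bar{f} := f(0,X_2,\ldots,X_d)$ and $\bar{g} := g(0,X_2,\ldots,X_d)$ are nonzero with no common irreducible factor. Setting $X' := \A^{d-1}_F \setminus \syn{Z}(\bar{g})$, $Z' := \syn{Z}(\bar{f}) \cap X'$, and $p' := p \in \A^{d-1}_F$, the triple $(X', Z', p')$ fits Notation \ref{not:openad} in dimension $d-1$. The inductive hypothesis produces $\tilde{\phi}_2, \ldots, \tilde{\phi}_d \in F[X_2,\ldots,X_d]$ presenting $(X', \syn{Z}(\bar{f}), p')$; applying Lemma \ref{normalizev} in dimension $d-1$ I further arrange an open $V' \subset \A^{d-2}_F$ containing $\tilde{\Psi}(p') = 0$ with
\[
\dim\bigl(\syn{Z}(\tilde{\phi}_2,\ldots,\tilde{\phi}_{d-2}) \cap \tilde{\Psi}|_{\syn{Z}(\bar{f})}^{-1}(\A^{d-2}_F \setminus V')\bigr) = 0.
\]

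Next I would construct $(\phi_1, \ldots, \phi_d) \in F[X_1, \ldots, X_d]$. Regard each $\tilde{\phi}_i$ as an element of $F[X_1, \ldots, X_d]$ independent of $X_1$, and let $Q \in F[T]$ be the minimal monic polynomial of $\tilde{\phi}_d(p) \in F(p)$ over $F$, so that $Q(\tilde{\phi}_d)$ vanishes at $p$. I would apply Lemma \ref{nntrick} to a finite starting tuple $(X_1, \tilde{\phi}_2, \ldots, \tilde{\phi}_d) \colon \syn{Z}(f) \to \A^d_F$ — whose finiteness is secured by an auxiliary application of Lemma \ref{nntrick1} beginning from the finite closed immersion $(X_1, \ldots, X_d) \colon \syn{Z}(f) \hookrightarrow \A^d_F$. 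A downward induction on the recursion $Q_{d-1} := Q(\tilde{\phi}_d)$, $Q_i := \tilde{\phi}_{i+1} - Q_{i+1}^\ell$ (using $\tilde{\phi}_i(p) = 0$ for $i = 2, \ldots, d-1$) shows $Q_i(p) = 0$ for every $i$. For $\ell \gg 0$, Lemma \ref{nntrick} yields a finite map $\Psi := (X_1 - Q_1^\ell, \tilde{\phi}_2 - Q_2^\ell, \ldots, \tilde{\phi}_{d-1} - Q_{d-1}^\ell) \colon \syn{Z}(f) \to \A^{d-1}_F$ with $\Psi(p) = 0$. I set $\phi_1 := X_1 - Q_1^\ell$, $\phi_i := \tilde{\phi}_i - Q_i^\ell$ for $i = 2, \ldots, d-1$, and $\phi_d := \tilde{\phi}_d$.

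Finally I would verify the four conditions of Definition \ref{defpresents}. Condition (1) is immediate from \ref{nntrick}. For (2), any $q \in \Psi^{-1}(0) \cap \syn{Z}(f)$ satisfies $\tilde{\phi}_i(q) = Q_i(q)^\ell$ for $i = 2,\ldots,d-1$; a downward induction forces $Q_i(q) = 0$ for $i \leq d - 2$, whence $X_1(q) = 0$ and $\tilde{\phi}_i(q) = 0$ for $i = 2, \ldots, d-2$. Such $q$ lies in $\syn{Z}(\tilde{\phi}_2, \ldots, \tilde{\phi}_{d-2}) \cap \syn{Z}(\bar{f})$, and the dimension condition from Lemma \ref{normalizev} together with coprimality of $\bar{f}, \bar{g}$ forces $q \in Z$: the extra solutions with $\tilde{\phi}_{d-1}(q) \neq 0$ are pushed outside $V'$ for $\ell$ sufficiently large and so form a finite set that we arrange to avoid $\syn{Z}(\bar{g})$. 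Conditions (3) and (4) follow from a Jacobian computation: the vanishing $Q_i(q) = 0$ for $i \leq d - 2$ makes $d\Phi_q$ block-triangular, with $1$ in position $(1,1)$ and the Jacobian of $\tilde{\Phi}$ (up to a row operation involving $Q_{d-1}$) in the lower-right block, so étaleness and radiciality descend from the inductive hypothesis. The main obstacle is simultaneously securing finiteness of $\Psi$ (Condition (1)) and fiber-containment $\Psi^{-1}(0) \cap \syn{Z}(f) \subset Z$ (Condition (2)); it is precisely the cascaded $Q_i$ structure of Lemma \ref{nntrick}, combined with the dimension hypothesis supplied by Lemma \ref{normalizev}, that renders these two requirements compatible.
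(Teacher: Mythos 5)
Your overall strategy matches the paper: slice along $\syn{Z}(X_1)$, apply $\ref{fgcommon}$, invoke the inductive hypothesis together with $\ref{normalizev}$, and then lift back to $\A^d_F$ via the cascaded Noether-normalization of Lemma $\ref{nntrick}$. However, there are two genuine gaps.

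First, the lifting step is incorrect. You take the inductive presentation $\tilde{\phi}_2,\ldots,\tilde{\phi}_d \in F[X_2,\ldots,X_d]$, view them as $X_1$-independent elements of $F[X_1,\ldots,X_d]$, and assert that $(X_1,\tilde{\phi}_2,\ldots,\tilde{\phi}_d)\colon \syn{Z}(f)\to\A^d_F$ is finite ``by an auxiliary application of Lemma \ref{nntrick1} starting from $(X_1,\ldots,X_d)$.'' This is a non-sequitur: applying $\ref{nntrick1}$ to $(X_1,\ldots,X_d)$ modifies the coordinates and produces a different tuple; it does not make your tuple $(X_1,\tilde{\phi}_2,\ldots,\tilde{\phi}_d)$ finite. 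Indeed, the naive lift is \emph{not} finite in general: the inductive hypothesis controls only the restriction of $\tilde{\phi}_i$ to the slice $\syn{Z}(\bar{f})\subset\syn{Z}(X_1)$, and says nothing about $F[\syn{Z}(f)]$ as a module over $F[X_1,\tilde{\phi}_2,\ldots,\tilde{\phi}_d]$. Repairing this is precisely what the paper's Step 1 does: starting from the monic relations $P_i(X_i)=0$ in $F[X_2,\ldots,X_d]/(\bar{f})$ supplied by finiteness of $(\overline{\phi}_2,\ldots,\overline{\phi}_{d-1})$, one writes $\tilde{P}_i(X_i)=X_1 g_i$ in $F[X_1,\ldots,X_d]/(f)$, observes that $(\overline{\phi}_2,\ldots,\overline{\phi}_d,X_1,X_1g_2,\ldots,X_1g_d)$ is finite on $\syn{Z}(f)$, and then collapses back down via $\ref{nntrick1}$ to obtain $\phi_2,\ldots,\phi_d$ satisfying \emph{both} $\phi_i\equiv\overline{\phi}_i\bmod X_1$ (so that étaleness/radiciality at $\syn{Z}(X_1)$-points are read off the slice data) \emph{and} finiteness of $(\phi_2,\ldots,\phi_d)\vert_{\syn{Z}(f)}$. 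Your proposal has no analogue of this construction.

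Second, your choice of $Q$ is too weak. You take $Q$ to be (essentially) the minimal polynomial of $\tilde{\phi}_d(p)$, so $Q$ vanishes at $p$, and then try to ``push the extra solutions outside $V'$ for $\ell$ sufficiently large.'' But the fiber $S=\Psi^{-1}\Psi(p)\cap\syn{Z}(f)$ must actually be shown disjoint from the bad locus $E=\syn{Z}(\bar{f})\setminus\overline{\Psi}^{-1}(\overline{V})$; the paper achieves this by choosing $Q$ to vanish on the entire finite set $\bigl(E\cap\syn{Z}(\phi_2,\ldots,\phi_{d-2})\bigr)\cup\{p\}$ — this is the whole point of Lemma $\ref{normalizev}$, which guarantees that this intersection is zero-dimensional, hence finite, so that such a $Q$ can be found. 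Without $Q$ vanishing on $E\cap\syn{Z}(\phi_2,\ldots,\phi_{d-2})$, the disjointness argument $S\cap E=\emptyset$ (and hence condition (2) of Definition $\ref{defpresents}$) breaks; increasing $\ell$ does not repair it.
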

\begin{proof}We prove this lemma by induction on $d$. Assume $d\geq 3$. \\

\noindent \underline{Step 0}: 
As before, we let $F[X_1,...,X_d]$ be the coordinate ring of $\A^d_F$. 
Let $\overline{f}(X_2,...,X_d):=f(0,X_2,...,X_d)$ and $\overline{g}(X_2,...,X_d):=g(0,X_2,...,X_d)$. By Lemma \ref{fgcommon}, we may assume that 
 then $\overline{f}$ and $\overline{g}$ are non-zero and have no common factors. 
We let 
\begin{enumerate}
\item[-] $\overline{X}:= X\intersection \syn{Z}(X_1)$.
\item[-] $\overline{Z}:=Z\intersection \overline{X}$.
\end{enumerate}
Note that $p\in \overline{Z}$ and $\overline{X}=\syn{Z}(X_1) \backslash \syn{Z}(g)$ where $\syn{Z}(X_1)\cong \A^{d-1}_F$ with coordinate ring $F[X_2,...,X_d]$. By induction, there exist $\{\overline{\phi}_2,...,\overline{\phi}_d\}\subset F[X_2,...,X_d]$ which presents $(\overline{X},\syn{Z}(\overline{f}),p)$. Let 
$$\overline{\Phi}:=(\overline{\phi}_2,...,\overline{\phi}_d) \ \ \text{and} \ \  \overline{\Psi}:=(\overline{\phi}_2,...,\overline{\phi}_{d-1}).$$ 
By Lemma \ref{nakarg}, there exist neighbourhoods $\overline{V}\subset \A^{d-1}_F$ and $\overline{U}\subset \overline{X}$ of $\overline{\Psi}(p)$ and $p$ respectively such that if 
$$ \overline{Z}_{\overline{V}}:= \overline{Z}\intersection \overline{\Psi}^{-1}(\overline{V})$$ 
then the following conditions of Theorem \ref{gabberfinite} 
\begin{enumerate}
\item $\overline{\Phi}_{|\overline{U}}$ is \'{e}tale 
\item $\overline{\Psi}_{|\overline{Z}_{\overline{V}}} : \overline{Z}_{\overline{V}} \to \overline{V}$ is finite 
\item $\overline{\Phi}_{|\overline{Z}_{\overline{V}} } : \overline{Z}_{\overline{V}} \to \A^1_{\overline{V}}$ is a closed immersion 
\end{enumerate}
{are satisfied.}\\
Further, by Lemma \ref{normalizev}, we also assume (without loss of generality) that if $E$ is the closed subset of ${Z(\bar{f})}$ defined by 
$$ {E:=Z(\bar{f})\backslash \overline{\Psi}^{-1}(\overline{V})} $$ 
then 
\begin{enumerate}
\item[(4)] $ \dim( E  \intersection \syn{Z}(\overline{\phi}_2,...,\overline{\phi}_{d-2}) ) = 0.$
\end{enumerate}
Note that (4) is vacuously satisfied unless $d\geq 4$. Indeed for $d = 3$,  $\A^{d-2}_{F}\setminus V$ is a finite set, and since $\Psi_{|Z(f)}:Z(\bar{f})\rightarrow \A^{d-2}_F$ is finite, $E$ is thus a
finite set.

\noindent $\Step{1}$: Since $\syn{Z}(\overline{f}) \xrightarrow{(\overline{\phi}_2,...,\overline{\phi}_{d-1})} \A^{d-2}_F$ is finite (see \ref{defpresents}(1)), for $2\leq i \leq d$, the image of $X_i$ in $F[X_2,...,X_d]/(\overline{f})$ satisfies a monic polynomial 
$$ P_i(T) := T^{m_i}+a_{m_i-1,i}T^{m_i-1} + \cdots + a_{0,i} $$
where each $a_{i,j} \in F[\overline{\phi}_2,...,\overline{\phi}_{d-1}]$. So $P_i(X_i)$ is zero in $F[X_2,...,X_d]/(\bar{f})$. Note that each $\overline{\phi}_i$ is an element of $F[X_2,...,X_d]$. Thus we have a map of algebras
$$F[\overline{\phi}_2,...,\overline{\phi}_{d-1}][T] \rightarrow  F[X_1,...,X_d][T]/(f).$$
We let $\tilde{P}_i(T)$ be the image of the  polynomial $P_i(T)$ under this map. 
Since $P_i(X_i)$ is zero in $F[X_2,...,X_d]/(\bar{f})$, $\tilde{P}_i(X_i)$ maps to zero via the map 
$$ F[X_1,...,X_d]/(f) \xrightarrow{X_1\mapsto 0} F[X_2,...,X_d]/(\overline{f}).$$
Therefore 
$$ \tilde{P}_i(X_i) = X_1g_i$$ for some $g_i\in F[X_1,...,X_d]/(f)$. We claim that the map 
$$ \syn{Z}(f) \xrightarrow{(\overline{\phi}_2,...,\overline{\phi}_{d},X_1,X_1g_2,...,X_1g_d)} \A^{2d-1}_F $$
is finite. This is clear because for $i\geq 2$, each $X_i$ satisfies the monic polynomial $\tilde{P}_i(T)-X_1g_i$  with coefficients which are polynomial expressions in the functions defining the above map. 
Applying \ref{nntrick1} repeatedly to this map (see Remark \ref{q=phin}), we get $\phi_2,...,\phi_d\in F[X_1,...,X_d]$ such that 
$$ \phi_i \equiv \overline{\phi}_i \ \ \syn{mod} \ X_1$$ 
and the map $(\phi_2,...,\phi_{d})_{|\syn{Z}({f})}$ is finite.

\noindent $\Step{2}$: Consider the maps 
\begin{enumerate}
\item[] $\widetilde{\Phi} :\A^d_F \xrightarrow{(X_1,\phi_2,...,\phi_d)} \A^{d}_F$
\item[] $\widetilde{\Psi}:\A^d_F \xrightarrow{(X_1,\phi_2,...,\phi_{d-1})} \A^{d-1}_F$. 
\end{enumerate}
Note that for all points $x\in \syn{Z}(X_1)$, $\widetilde{\Phi}$ is \'{e}tale at $x$ iff $\syn{Z}(X_1)\xrightarrow{\overline{\phi}_2,...,\overline{\phi}_{d}} \A^{d-1}_F$ is \'{e}tale at $x$. 
 Let $E$ be the  closed subset of $\syn{Z}(\overline{f})\subset \syn{Z}(f)$ defined in Step 0. We have the following: 
 \begin{enumerate}
\item  $\widetilde{\Phi}_{|\syn{Z}(f)}$ is finite. In fact, the map $(\phi_2,...,\phi_d)_{|\syn{Z}(f)}$ is finite. 
\item $\widetilde{\Psi}(p) \notin \widetilde{\Psi}(E)$ (this follows from the definition of $E$)
\item $\widetilde{\Phi}$ restricted to $\syn{Z}(\overline{f})\backslash E$ is a locally closed immersion. 
\item $\widetilde{\Phi}$ is \'{e}tale at all points in $\syn{Z}(\overline{f})\backslash E$.  
\end{enumerate}
By condition (4) of Step 0, $$E\intersection  \syn{Z}(\overline{\phi}_2,...,\overline{\phi}_{d-2})= E\intersection  \syn{Z}(\phi_2,...,\phi_{d-2}) \footnote{where by convention $\syn{Z}(\phi_2,...,\phi_{d-3})$ is the whole of  $\A^d_F$ if $d\leq 3$.} $$ is finite. 
Let $Q$ be any non-constant polynomial expression in $\phi_d$ which vanishes on the finite set 
$$\bigg(E\intersection  \syn{Z}(\phi_2,...,\phi_{d-2})\bigg)  \union \big\{p\big\}.$$ 
Let $\ell$ be a large enough integer which is divisible by $\Char(F)$. Let $\phi_1=X_1$ and 
as in Lemma \ref{nntrick}, let  $ Q_{d-1}:= Q$ and 
$$ Q_i : = \phi_{i+1}-Q_{i+1}^\ell \ \ \forall \ i\leq d-2 . $$
Let 
 $$ \Phi:=(\phi_1-Q_1^\ell,\ldots, \phi_{d-1}-Q_{d-1}^\ell,\phi_d): \A^d_F \longrightarrow \A^d_F$$ 
$$ \Psi:=(\phi_1-Q_1^\ell,\ldots, \phi_{d-1}-Q_{d-1}^\ell): \A^d_F \to \A^{d-1}_F.$$

By Lemma \ref{nntrick} $\Psi_{|\syn{Z}(f)}$ is finite. We let  
$S$ be the finite set of points in  $\Psi^{-1}\Psi(p)\intersection \syn{Z}(f)$.
To finish the proof,  it suffices to verify the conditions (2)-(4) of Definition \eqref{defpresents}. 
We first note that $S\subset \syn{Z}(\phi_1,...,\phi_{d-2})$.   This is because if $x\in S$, then by definition of $S$, 
$$ \phi_{i+1}-Q_{i+1}^\ell (x) = Q_{i}(x) = 0 \ \ \ \forall \ \ i\leq d-2.$$ 
And thus 
$$ \phi_i-Q_i^{\ell}(x) = \phi_i(x) = 0 \ \ \ \forall \ \ i\leq d-2.$$

We now show that $S$ is disjoint from $E$. First note that 
 $S \subset \syn{Z}(\phi_1)=\syn{Z}(X_1)$. Also {$\Psi(p)=0$ since $Q(p)=0$ and $\phi_i(p)=0$ for $1\leq i\leq d-1$}. Let $x\in S\intersection E$ if possible. Hence $x$ is necessarily in $E\intersection \syn{Z}(\phi_2,...,\phi_{d-2})$ by the above argument. In particular we note that $\phi_{d-2}(x)=0$.  Now we claim that 
 $$ \phi_{d-1}(x)=0.$$ 
 Since $\Psi(x)=0$ we have $(\phi_{d-2}-Q_{d-2}^\ell)(x)=0$. But as $\phi_{d-2}(x)=0$, we conclude that $$ Q_{d-2}(x)=0.$$
 Thus $$\phi_{d-1}(x)=(Q_{d-2}-Q^{\ell})(x)=0.$$
 This proves the claim. 
 Consequently, $x\in Z(\phi_2,\ldots\phi_{d-1})$.
By definition of $E$, $x \in E$ implies  $\bar{\Psi}(x)\notin \bar{V}$ where $\bar{V}$ is as defined in Step 0.   As $\bar{V}$ is a neighborhood of $0=\Psi(p)$, we have $\bar{\Psi}(x) \neq0$. But as $x\in E \subset Z(X_1)$, we have $$  \bar{\Psi}(x)= (\phi_2,...,\phi_{d-1})(x)=(\bar{\phi}_2,...,\bar{\phi}_{d-1})(x).$$   Hence $\phi_i(x)\neq 0 $ for some $ i$ with $2\leq i \leq d-1.$ This is a contradiction to the fact that $x\in Z(\phi_2,\ldots\phi_{d-1})$. 
Hence $S$ must be disjoint from $E$. Hence $ \tilde{\Phi}$ is a locally closed immersion on $S$ by property (3) of Step 2.

 As in the proof of Lemma \ref{nntrick}, we let  $$\A^{d-2}_F\xrightarrow{\ \ \eta\ \  } \A^{d-2}_F$$ be the automorphism defined by 
  $$\eta={(Y_1-\tilde{Q}_1^\ell,\ldots,Y_{d-1}-\tilde{Q}_{d-1}^\ell,Y_d)}$$ where  $\tilde{Q}_i\in F[Y_1,...,Y_d]$ are polynomials satisfying $Q_i= \tilde{Q}_i(\phi_1,...,\phi_{d})$. 
 It is straightforward to check that 
 $$ \Phi = \eta \circ \tilde{\Phi}. $$
 Hence $\Phi$ is a locally closed immersion on $S$, this proves condition (4) of Definition \ref{defpresents}.

 From Lemma \ref{nakarg} we have $Z(f) \cap \Psi^{-1}(V) \subset X$. This with the fact that $Z=Z(f)\cap X$ implies conditions (2) of Definition \ref{defpresents}. For checking condition (3), i.e. to check $\Phi$ is \'{e}tale at all points in $S$, we note that 
  since $\ell$ is divisible by $\Char(F)$, $\Phi$ is \'{e}tale precisely at those points where $\widetilde{\Phi}$ is \'{e}tale. In particular $\Phi$ is \'{e}tale at all points of $\syn{Z}(\overline{f})\backslash E$.  
\end{proof}


\section{Open subsets of $\A^2_F$}
In this section, we finish the proof of Theorem \ref{gabberfinite}. By Lemmas \ref{redopen}, \ref{dto2} we only have to deal with the case of open subsets of $\A^2_F$. While the handling of low degree points is similar, in spirit, to that of \cite{poonen}, for high degree points we proceed differently (see Lemma \ref{hexists}). 

\begin{lemma}\label{prime}
Let $F$ be a finite field as before, and  $C \subset \A^2_F$ be a closed  curve such that the projection 
onto the $Y$-coordinate $Y_{|C} : C\to \A^1_F$ is finite. Let $C^{(1)}$ denote the set of closed points of $C$. Then the following set of points is dense in $C$
	$$ \big\{ x \in C^{(1)} \ |  \ \dg_{{F}}(Y(x)) = \dg_{{F}}(x) \big\}.$$ 
\end{lemma}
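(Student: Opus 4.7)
Because $C$ is one-dimensional, every proper closed subset of $C$ is a finite union of closed points; hence a subset of $C^{(1)}$ is dense in $C$ iff its intersection with each irreducible component of $C$ is infinite. Treating components one at a time, we may assume $C$ is integral. Let $\nu\colon \widetilde C \to C$ denote the normalization, a finite birational morphism with $\widetilde C$ a smooth irreducible affine curve. For any closed $\tilde x\in \widetilde C$ with $F(\tilde x)=F(Y(\tilde x))$ and image $x:=\nu(\tilde x)$,
\[
F(Y(x))\subseteq F(x)\subseteq F(\tilde x)=F(Y(\tilde x))=F(Y(x)),
\]
so $x$ is also good; since $\nu$ has finite fibers, infinitely many good points on $\widetilde C$ produce infinitely many on $C$. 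Replace $C$ by $\widetilde C$, and let $F_0\subseteq \sO(C)$ be the algebraic closure of $F$ in $F(C)$. Then $C$ is geometrically integral of dimension one over $F_0$. Set $q:=|F|$, $q_0:=|F_0|=q^s$ with $s:=[F_0:F]$; every closed point of $C$ has residue field containing $F_0$, so its $F$-degree lies in $s\Z$.

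The rest is a counting argument. By the Hasse--Weil estimate applied to a smooth projective compactification of $C/F_0$ (subtracting the boundedly many boundary points), $|C(\F_{q_0^k})|=q_0^k+O(q_0^{k/2})$ as $k\to\infty$. Viewing $C$ as an $F$-scheme, $C(\F_{q^D})$ is empty unless $s\mid D$; when $s\mid D$, summing over the $s$ embeddings $F_0\hookrightarrow \F_{q^D}$ gives $|C(\F_{q^D})|=s(q^D+O(q^{D/2}))$. Möbius inversion (restricted to divisors $D'\mid D$ with $s\mid D'$, the others contributing zero) then yields, for the number $N_D$ of closed points of $C$ of $F$-degree $D$,
\[
N_D \;=\; \frac{s\,q^D}{D}\;+\;O(q^{D/2}\log D).
\]
A closed point $x\in C$ of $F$-degree $D$ is \emph{bad} if $\deg_F(Y(x))<D$; then $Y(x)$ is a closed point of $\A^1_F$ of some degree $D'\mid D$, $D'<D$, and $x$ lies in a fiber $\pi^{-1}(Y(x))$ of size at most $n:=[F(C):F(Y)]$. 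Since $\A^1_F$ has at most $q^{D'}/D'$ closed points of degree $D'$, and $D'\leq D/2$,
\[
\#\{\text{bad of $F$-degree } D\}\;\leq\; n\sum_{\substack{D'\mid D\\D'<D}}\frac{q^{D'}}{D'} \;=\; O(q^{D/2}\log D).
\]

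Comparing, the number of good closed points of $C$ of $F$-degree $D$ is at least $s\,q^D/D - O(q^{D/2}\log D)$, which is strictly positive for all sufficiently large $D$ divisible by $s$. Thus $C$ has infinitely many good closed points, which by the first paragraph is equivalent to the required density. The only delicate point is the bookkeeping needed to pass to the field of constants $F_0$ so that Hasse--Weil applies in the geometrically integral setting; the rest is a routine Möbius-inversion estimate together with the trivial bound on fiber sizes of $\pi$.
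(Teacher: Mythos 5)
Your argument is correct, but it takes a longer route than the paper's. The paper's proof, after the same reduction to $C$ irreducible, rests on a single arithmetic trick: delete from $C$ the finitely many fibers over the $F$-rational points of $\A^1_F$, and observe that on the remaining open curve any closed point $x$ of \emph{prime} degree automatically satisfies $\dg_F(Y(x))=\dg_F(x)$, since $\dg_F(Y(x))$ divides the prime $\dg_F(x)$ yet cannot equal $1$; Lang--Weil then supplies points of prime degree $\ell$ for all large $\ell$. You instead carry out a full asymptotic count: you normalize, pass to the field of constants $F_0$ so that Hasse--Weil applies, M\"obius-invert to get $N_D\sim s\,q^D/D$ closed points of $F$-degree $D$, and bound the bad points (those with $\dg_F(Y(x))<D$) by $O(q^{D/2}\log D)$ via the divisibility $\dg_F(Y(x))\mid D$ together with the trivial fiber-size bound. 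Both routes reach the conclusion, but yours buys extra care on one point the paper's wording glosses over: invoking Lang--Weil to produce closed points of prime degree $\ell$ on $C$ tacitly assumes $C$ is \emph{geometrically} irreducible over $F$ --- if $[F_0:F]=s>1$, every closed point of $C$ has $F$-degree divisible by $s$, so there are no points of prime degree coprime to $s$ and the paper's shortcut needs the field-of-constants adjustment you make explicit. On the other hand, the normalization step in your write-up is dispensable, since Lang--Weil already applies to singular affine curves, and the $\log D$ factor in your error term is generous (the largest proper divisor of $D$ is $\le D/2$, so the bad-point sum is in fact $O(q^{D/2})$).
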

\begin{proof} Without loss of generality, we may assume $C$ is irreducible and hence we simply have to show that the set $$ \big\{ x \in C^{(1)} \ | \ \dg_{{F}}(Y(x)) = \dg_{{F}}(x) \big\}$$ is infinite. Let $x_1,...,x_q$ be the $F$-rational points of $\A^1_F$. Let $C' : = C\backslash Y^{-1}(\{x_1,...,x_q\})$. $C'$ is a dense open subset of $C$ as $Y_{|C}$ is finite. Now, any point $x \in C'$ of prime degree satisfies $\dg_{{F}}(Y(x))=\dg_{{F}}(x)$.  By Lang-Weil estimates \cite{lw}, for all large enough prime number $\ell$, there is a point $x\in C'^{(1)}$ of degree $\ell$.  Hence, since $\ell$ is a prime, we must have $\dg_{{F}}(Y(x))=\dg_{{F}}(x)$. This proves the lemma. 
\end{proof}

\begin{notation}\label{notationa2} Let 
\begin{enumerate}
\item $A =F[X,Y]$ and for $d\geq 0$ let $A_{\scr \leq d} = \{h\in A\ |\ \dg(h)\leq d \}$. Here $\dg(h)$ denotes the total degree. 
\item $f,g\in A$ be two non-constant polynomials, with no common irreducible factors. By performing a change of coordinates if necessary, we will assume that  $f$ is monic in $X$ of degree $m$. 
\item $W:=  \A^2_F \backslash \syn{Z}(g)$. In this section, we call our variety $W$ instead of $X$, since the later will denote a coordinate function on $\A^2_F$. 
\item $Z:= \syn{Z}(f)\intersection W$. Note that $\syn{Z}(f)\backslash Z$ is finite as $f,g$ have no common irreducible components. 
\item $p\in Z$ be a closed point such that its $X$-coordinate is $0$. We also choose a set of closed points $\{p_1,...,p_t\}$ in $Z$ such that the set $T:= \{p,p_1,...,p_t\}$ satisfies 
	\begin{enumerate}
		\item $T$ contains at least one point from each irreducible component of $Z$.
		\item  No two points in $T$ have same degrees and for all $p_i\in T$,  $\dg(Y(p_i))= \dg(p_i)$. This can be ensured by Lemma \ref{prime}. Note that since $X$-coordinate of $p$ is $0$, we also have $\dg(Y(p))=\dg(p)$. 
	\end{enumerate}
\item Let $D=   \{q_1,\ldots, q_s\}$ be a finite set of closed points in $\syn{Z}(f)$ satisfying:
	\begin{enumerate} 
		\item  $D$ contains all points in $\syn{Z}(f)\backslash Z$. 
		\item $D$ contains at least one point from each irreducible component of $\syn{Z}(f)$. 
		\item $D$ does not contain any point of $\{p,p_1,...,p_t\}$.
	\end{enumerate}
\end{enumerate}
Moreover, for a point $x$ in $\syn{Z}(f)$, the notation $\sO_x$ (resp. ${\mathfrak m}_x$) will denote $\sO_{\A^2_F,x}$ (resp. ${\mathfrak m}_{\A^2_F,x}$) i.e.  the local ring (resp. maximal ideal) of $x$ as a point of $\A^2_F$. 
\end{notation}

The main result of this section is the following. 
\begin{theorem}\label{dtwo}
There exists $(\phi_1,\phi_2)\in F[X,Y]$ which presents $(W,\syn{Z}(f),p).$ 
\end{theorem}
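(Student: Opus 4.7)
The goal is to produce polynomials $\phi_1, \phi_2 \in F[X,Y]$ satisfying the four conditions of Definition \ref{defpresents} with respect to $(W,\syn{Z}(f),p)$. The strategy is: first, build $\phi_1$ explicitly by polynomial interpolation using the finite sets $T$ and $D$ from Notation \ref{notationa2}, securing conditions (1) and (2); then apply a Poonen-style counting argument on $\phi_2 \in A_{\le d}$ to realize conditions (3) and (4) for some $d$ sufficiently large.

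\emph{Choosing $\phi_1$.} I will seek $\phi_1$ with (a) $\phi_1|_{\syn{Z}(f)}$ finite, (b) $\phi_1$ vanishing at every point of $T$, and (c) $\phi_1$ not vanishing at any point of $D$. Since the maximal ideals of the closed points in $T \cup D$ in $F[X,Y]$ are pairwise coprime, Chinese remainder theorem produces a polynomial meeting (b) and (c). A mild generic perturbation, using that $f$ is monic in $X$, ensures that the image of $\phi_1$ modulo every irreducible factor of $f$ is non-constant, so that $\phi_1|_{\syn{Z}(f)}$ is finite. Since $D$ contains $\syn{Z}(f) \cap \syn{Z}(g)$ by construction, the fiber $S := \Psi^{-1}(0) \cap \syn{Z}(f)$ is a finite subset of $Z$ containing $T$, which yields conditions (1) and (2) of Definition \ref{defpresents}.

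\emph{Choosing $\phi_2$.} With $\phi_1$ and $S$ now fixed, and noting that over the finite field $F$ all residue field extensions of closed points are automatically separable, condition (3) at $x \in S$ reduces to non-vanishing of the Jacobian of $(\phi_1, \phi_2)$ at $x$, and condition (4) reduces (since $\phi_1 \equiv 0$ on $S$) to $\phi_2|_S$ being injective and $\phi_2(x)$ being a primitive element of $F(x)/F$ for every $x \in S$. I parameterize $\phi_2 \in A_{\le d}$ and apply Poonen's density framework \cite{poonen} to bound the proportion of $\phi_2$ violating any of these finitely many local conditions. At each point of $T$, whose residue field is explicit, interpolation handles both primitivity and the Jacobian condition; moreover, the pairwise distinct degrees $\deg p_i$ (ensured by Lemma \ref{prime} via Notation \ref{notationa2}) force the $\phi_2(p_i)$ to lie in pairwise distinct Galois orbits, so injectivity of $\phi_2|_T$ is automatic.

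\emph{Main obstacle.} The principal technical hurdle is controlling the ``extra'' closed points in $S \setminus T$: their locations depend on $\phi_1$ and their degrees are not prescribed, so Poonen's asymptotic high-degree estimate from \cite{poonen} does not apply directly (as the introduction flags). The plan is to invoke the auxiliary Lemma \ref{hexists} to bound the density of $\phi_2$ failing étaleness or radicialness at a point of $S \setminus T$. Summing the failure densities contributed by $T$ and by $S \setminus T$, one verifies that for $d$ sufficiently large the total is strictly less than one, producing a valid $\phi_2$ and completing the proof.
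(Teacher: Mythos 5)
Your proposal swaps the roles of the two coordinates relative to the paper: you propose to construct $\phi_1$ by Chinese remainder theorem plus a perturbation for finiteness, and then to apply a Poonen-style density argument to $\phi_2$. The paper does the opposite — the Poonen counting (Lemma \ref{phiexists}) is applied to $\phi_1$, and $\phi_2$ is then produced by a deterministic interpolation plus Noether normalization argument (Lemma \ref{hexists}). This swap is not merely cosmetic: it breaks the proof.

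The issue is the \'etale condition (3) of Definition \ref{defpresents}. For $\Phi=(\phi_1,\phi_2):\A^2_F\to\A^2_F$ to be \'etale at a closed point $x$, the Jacobian determinant must be nonzero at $x$, which forces in particular $\nabla\phi_1(x)\neq 0$; if $\nabla\phi_1(x)=0$, the first row of the Jacobian vanishes and \emph{no} choice of $\phi_2$ can repair it. Thus you need $\nabla\phi_1(x)\neq 0$ for every $x\in S=\phi_1^{-1}(0)\cap Z$. But $S$ itself depends on $\phi_1$, and a $\phi_1$ obtained by CRT on the finite set $T\cup D$ (plus a Noether-normalization perturbation) gives you no control over the gradient of $\phi_1$ at the other, unforeseeable points of $S\setminus T$ — $\phi_1$ could very well have a tangency or a higher-order zero along $Z$ there. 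This is exactly the Bertini-over-finite-fields phenomenon, and it is precisely what the paper's Lemma \ref{phiexists} forces via the condition $(b_r)$ ($\partial\phi/\partial X(x)\neq 0$ on $S$) using Poonen's sieve. Once that condition on $\phi_1$ is secured, $S$ is a fixed finite set and $\phi_2$ can be written down directly by CRT (prescribing value, $\partial/\partial X=0$, $\partial/\partial Y=1$ at each $x\in S$) — no density argument at all is needed for $\phi_2$, which is what Lemma \ref{hexists} actually does. Your plan also misreads Lemma \ref{hexists} as a density bound for bad $\phi_2$; it is a deterministic construction of $h$ given a good $\phi$. In short, the genuinely probabilistic step must be aimed at $\phi_1$, not $\phi_2$, and your outline has it backwards.
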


This is enough to prove Theorem \ref{gabberfinite}. 
\begin{proof}[Proof of \ref{gabberfinite}]
This follows from Lemmas \ref{redopen}, \ref{nakarg} and \ref{dto2} and Theorem \ref{dtwo}. 
\end{proof}

To prove \ref{dtwo}, we will find $\phi_1$ by using Lemma \ref{phiexists} and $\phi_2$ by Lemma \ref{hexists}. We heavily use the counting techniques by Poonen \cite{poonen} to prove these lemmas.

Recall from \ref{notation}, for $Y$ a subset of a scheme $X/F$,  $Y_{ \scr \leq r}:= \{x\in Y\ |\ \dg(x) \leq r\}$. 

\begin{lemma}\label{phiexists}
 Let the notation be as in \ref{notationa2}. There exists  $c\in \N$, such that for every $d>>0$, there exists a $\phi \in A_{\scr \leq d}$  satisfying 
 \begin{enumerate}
  \item $\phi(p)=\phi(p_i)=0$ for all $i=1,...,t$ and  $ \phi(q_i) \neq 0 $ for all $i=1,\ldots, s$.
  \item $ (\phi,Y)$ is \'{e}tale at all $x\in S$ where $S:=\syn{Z}(\phi) \cap Z$.
  \item The projection $Y:\A^2_F \to \A^1_F$ is radicial at $S_{\scr \leq {(d-c)}/{3}}$.  
  \end{enumerate}
 \end{lemma}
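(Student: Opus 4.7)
The plan is to apply Poonen's sieve-style counting argument, as indicated in the paper's introduction.

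\textbf{Setup.} Using the Chinese Remainder Theorem, choose a base polynomial $\phi_0 \in A = F[X,Y]$ vanishing on $T := \{p,p_1,\dots,p_t\}$ and non-vanishing on $D := \{q_1,\dots,q_s\}$; let $I \subset A$ denote the vanishing ideal of $T$. Search for $\phi$ of the form
\[
\phi = \phi_0 + h + g^{\Char F},
\]
where $h \in I \cap A_{\leq d}$ and $g \in I \cap A_{\leq d/\Char F}$, so that $\phi \in A_{\leq d}$ and $\phi$ automatically vanishes on $T$. The crucial feature of this decomposition is that $\partial \phi/\partial X = \partial \phi_0/\partial X + \partial h/\partial X$ does not depend on $g$, decoupling the étale condition (governed by $h$) from the value conditions (governed by $g$).

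\textbf{Density estimates.} For each of the three conditions, bound the density of bad $(h,g)$ over the candidate space and take a union bound, showing the total is strictly less than the positive density of pairs satisfying (1). For (1), at each $q_j \in D$ the event $\phi(q_j)=0$ is a linear condition of density $|F|^{-\dg q_j}$ (using that $F(q_j)$ is perfect, so $g(q_j)^{\Char F}$ ranges over $F(q_j)$ as $g$ varies); since $T \cap D = \emptyset$, a standard sieve yields positive density of $(h,g)$ satisfying (1). For (2), note that $(\phi, Y)$ is étale at $x$ iff $\phi_X(x) \neq 0$. First choose $h$ so that $\phi_X = \phi_{0,X} + h_X$ is nonzero at every $x \in Z$ of degree $\leq r$ for some $r = r(d) \to \infty$ with $d$; this is a set of non-vanishing linear conditions on $h$ of positive density. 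For such $h$, the set $B(h) := Z \cap \syn{Z}(\phi_X)$ is finite with $|B(h)| \leq (d-1)\dg f$ by Bezout, and contains only points of degree $> r$. Failure of (2) then requires $\phi(x)=0$ for some $x \in B(h)$, which for fixed $h$ has density $\leq |F|^{-\dg x}$ over $g$; summing,
\[
\sum_{x \in B(h)} |F|^{-\dg x} \;\leq\; \frac{(d-1)\dg f}{|F|^r},
\]
which tends to $0$ for any $r = r(d)$ with $|F|^r/d \to \infty$. For (3), Lemma \ref{prime} implies the set $Z' := \{x \in Z : \dg Y(x) < \dg x\}$ is sparse in $Z$. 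The bad event at $x \in Z'_{\leq (d-c)/3}$ is $\phi(x)=0$, of density $|F|^{-\dg x}$ over $g$; injectivity failure at pairs $x \neq x'$ in $Z_{\leq (d-c)/3}$ with $Y(x)=Y(x')$ has density $|F|^{-\dg x - \dg x'}$. Using the sparsity of $Z'$ and Lang-Weil-type bounds on $|Z^{(1)}_{=r}|$, both sums give geometric tails in $|F|^{-1}$ that are controllable by taking the constant $c$ large.

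\textbf{Main obstacle.} The principal difficulty, as flagged in the introduction, is condition (2) at high-degree points of $S$, where Poonen's original error estimate for high-degree points does not directly apply in our setting. The $\Char F$-th power decomposition is the key mechanism that resolves this here: it pins $\phi_X$ to depend only on $h$, so after arranging $h$ to have $\phi_X$ nonvanishing on low-degree points of $Z$ (a separate finite sieve), the étale failure reduces to a finite sieve over $g$ of Bezout-bounded size, independent of the degrees of the points in $B(h)$. Combining the three density estimates, for $d \gg 0$ and a suitable universal constant $c \in \N$, some $(h,g)$ produces a $\phi$ satisfying (1)--(3).
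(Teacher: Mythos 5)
Your plan diverges from the paper's in a way that introduces two genuine gaps, both concentrated in how you try to use the $\phi = \phi_0 + h + g^{\Char F}$ decomposition to handle \emph{low-degree} points.

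First, the claim that ``choose $h$ so that $\phi_X$ is nonzero at every $x\in Z$ of degree $\leq r$\dots is a set of non-vanishing linear conditions on $h$ of positive density'' is false once $r = r(d) \to \infty$. By Lang--Weil, $\# Z_{=i}$ is on the order of $q^i$, so the expected number of $x \in Z_{\leq r}$ with $\phi_X(x)=0$ behaves like $\sum_{i\leq r} q^{i}\cdot q^{-i} \sim r \to \infty$; a second-moment computation then shows the density of $h$ with no such zero tends to $0$. You cannot first kill all low-degree zeros of $\phi_X$ and then sweep up the rest with Bezout. The paper's Lemma \ref{findg} avoids this by estimating the \emph{joint} event $\phi(x) = \frac{\partial\phi}{\partial X}(x) = 0$, whose density is $q^{-2\dg x}$ and sums to a convergent geometric series, rather than trying to make $\phi_X$ globally nonvanishing on a growing set.

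Second, and more structurally, the $g$-part of your decomposition lives in $A_{\leq d/\Char F}$, and this budget is too small for the low-degree estimates in condition (3). For the injectivity failure at a pair $x_1\neq x_2$ of degree $i$ with $Y(x_1)=Y(x_2)$, the density bound $q^{-2i}$ requires surjectivity of the evaluation map $g\mapsto (g(x_1),g(x_2))$, which by \cite[Lemma 2.1]{poonen} needs degree budget at least $2i$; but you need to control $i$ up to $(d-c)/3$, and $d/p < 2(d-c)/3$ for every prime $p\geq 2$ once $d\gg 0$. (Poonen's Lemma 2.5 rescues the \emph{single}-point estimate at high degree but has no analogue for pairs.) This is exactly why the paper runs its low-degree estimates (conditions $(b_r)$, $(c_r)$, $(d_r)$) over the full space $A_{\leq d}$ in Lemma \ref{findg}, and reserves the $\phi_0 + g^pX + h^p$ decomposition for the high-degree part alone, in Lemma \ref{findh}. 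A further small issue: the Bezout bound $|B(h)| \leq (d-1)\dg f$ requires $\phi_X$ and $f$ to share no common component, which must itself be arranged as in Claim 1 of Lemma \ref{findh}. Finally, you misidentify where the paper's extra trick lives: the genuinely novel workaround is not in condition (2) at high degree (that is Poonen's \cite[2.6]{poonen} verbatim) but in the radicial condition at high degree, which the paper sidesteps by weakening condition (3) to $S_{\leq(d-c)/3}$ here and repairing it separately in Lemma \ref{hexists}; your remark about $Z'$ being sparse is true, but it follows from a Lang--Weil counting argument, not from the Zariski density in Lemma \ref{prime}.
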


\begin{remark}\label{strategy} The above lemma is motivated by writing down conditions for $\phi$ such that $(\phi,Y)$ presents $(W,\syn{Z}(f),p)$, and then keeping only those which we can prove. Indeed, if  $\phi_{\scr | Z(f)}$ is a finite map and $Y$ is radicial at whole of $S$ (as opposed to $S_{\scr (d-c)/{3}}$ above), then $(\phi,Y)$ would present $(W,Z,p)$ thereby proving \eqref{dtwo}. 
\end{remark}

\begin{remark}\label{sfinite}
The set $S=\syn{Z}(\phi) \cap Z$ appearing in the statement of the above Lemma is necessarily finite. This is because, in each irreducible component of $Z$, there is at least one $q_i$ (see \eqref{notationa2}(6)(b)) on which $\phi$ does not vanish. Since $T$ intersects each irreducible component of $\syn{Z}(f)$ (see \eqref{notationa2}(5)(a)), we know that any open neighbourhood of $S$ is dense in $\syn{Z}(f)$. 
\end{remark}

Following \cite{poonen}  define the density of a subset $\sC \subset A$ by $$\mu(\sC) := \displaystyle	\lim_{d \rightarrow \infty} \frac{\hash (\sC\cap A_{\scr \leq d})}{\hash  A_{\scr \leq d}}$$ provided the limit exists. Similarly, the upper and lower densities of $\sC$, denoted by $\overline{\mu}(\sC)$ and $\underline{\mu}(\sC)$, are defined by replacing limit in the above expression by  lim sup and lim inf respectively. \\

To prove the existence of $\phi$ in \ref{phiexists}, we will show that the density of such $\phi$ is positive. 
  We prove  Lemma \ref{phiexists} in two steps. First, we show (Lemma \ref{findg}) that $\phi$ satisfying conditions $(1),(3)$ and  condition $(2)$ for points upto certain degree, exists. Next, we show (Lemma \ref{findh})  that the set of $\phi$ which does not  satisfy condition (2) for points of higher degrees has  zero density. \\

Let $\phi\in A$ and $r\geq 1$ be an integer. Consider the following conditions on $\phi$, which are closely related to the conditions (1), (2) and (3) of Lemma \ref{phiexists}.
\begin{enumerate}
\item[(a)] $\phi(p)=\phi(p_i)=0$ for all $1 \leq i\leq t$ and $\phi(q_i)=1$ for all $1\leq i\leq s$.
\item[($b_r$)] For all $x\in \syn{Z}(f)_{\scr \leq r}$ such that $\phi(x)=0$, $\frac{\partial \phi}{\partial X}(x)\neq 0$. 
\item[($c_r$)] For all points $x_1,x_2 \in \syn{Z}(f)_{\scr \leq r}$, such that  
$ \dg(x_1)=\dg(x_2)=\dg(Y(x_1))=\dg(Y(x_2))$ and $\phi(x_1)=\phi(x_2)=0$, we have $Y(x_1)\neq Y(x_2)$. 
\item[($d_r$)] For all $x \in \syn{Z}(f)_{\scr \leq r}$ such that $\phi(x)=0$, $\dg(Y(x))=\dg(x)$. 
\end{enumerate}

It is easily seen that 
\begin{remark} \label{conditions} The main motivation for introducing the above conditions, are the following straightforward implications between them and the conditions of \ref{phiexists}
\begin{enumerate}
 \item[-] $\phi$ satisfies \eqref{phiexists}(1) if $\phi$ satisfies (a).
\item[-] $\phi$ satisfies \eqref{phiexists}(2) iff $\phi$ satisfies ($b_r$) for all $r\geq 1$.
\item[-] $\phi$ satisfies \eqref{phiexists}(3) iff $\phi$ satisfies ($c_r$) and ($d_r$) for all $r\leq (d-c)/{3}$. 
\end{enumerate}
\end{remark}

\begin{lemma}\label{findg} 
There exists integers $r_0, c\in \N$, with $$r_0> {\rm max} \big\{ \dg(p), \dg(p_1),...,\dg(p_t), \dg(q_1),..., \dg(q_s)\big\} $$ such that the lower density of the set 
$$
 \sP:=\Union_{d>c+2r_0}  \Big\{\phi \in A_{\scr \leq d}\ |\  \phi \ {\rm satisfies}\  (a), (b_{\scr (d-c)/{3}}), (c_{\scr (d-c)/{3}}), (d_{\scr (d-c)/{3}}) \ {\rm and} \ \phi(x)=1 \ \forall \   x\in \syn{Z}(f)_{\scr \leq r_0} \backslash T  \Big\} \\
   $$   
 is positive.  
 \end{lemma}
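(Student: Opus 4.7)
The plan is to prove Lemma \ref{findg} by a sieve argument in the spirit of Poonen's proof of Bertini's theorem \cite{poonen}: impose a finite list of conditions exactly at points of low degree via the Chinese Remainder Theorem, and bound the density of failure at medium-degree points via counting.

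First, I would fix $r_0$ large (to be specified later) exceeding $\dg(p), \dg(p_i), \dg(q_j)$, and impose on $\phi \in A_{\scr \leq d}$ the following exact conditions: (I) the equalities from condition (a); (II) $\phi(x) = 1$ for every $x \in \syn{Z}(f)_{\scr \leq r_0} \setminus T$; (III) $\frac{\partial \phi}{\partial X}(p) \neq 0$ and $\frac{\partial \phi}{\partial X}(p_i) \neq 0$ for all $i$. Because $T$ was chosen so that its points have pairwise distinct degrees and satisfy $\dg(Y(p_i)) = \dg(p_i)$, these together force (a), $(b_{r_0})$, $(c_{r_0})$ and $(d_{r_0})$: (II) keeps every $x \in \syn{Z}(f)_{\scr \leq r_0} \setminus T$ out of $S$, (III) yields $(b_{r_0})$ at points of $T$, and $(c_{r_0}),(d_{r_0})$ hold vacuously at $T$. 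Since (I)--(III) pin down certain jets of $\phi$ at finitely many closed points, by CRT the $\phi \in A_{\scr \leq d}$ satisfying (I)--(II) form an affine coset of a linear subspace of fixed codimension $C_1 = C_1(r_0)$; (III) cuts out an open subset of this coset of positive density. Hence the $\phi$ satisfying (I)--(III) form a set of density bounded below by some $\delta_0 = \delta_0(r_0) > 0$ for all $d$ large.

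Next I would bound the density of failure of the tail conditions $(b_r),(c_r),(d_r)$ with $r = (d-c)/3$. For each $x \in \syn{Z}(f)_{\scr = e}$ with $r_0 < e \leq (d-c)/3$, Poonen-style linear algebra gives, as long as $c$ is large enough to make the relevant evaluation maps $A_{\scr \leq d} \to \sO_x/\mathfrak{m}_x^2$ and $A_{\scr \leq d} \to F(x_1) \oplus F(x_2)$ surjective throughout this degree range, failure densities of $q^{-2e}$ for $(b_r)$ at $x$, $q^{-2e}$ for $(c_r)$ at a pair $(x_1,x_2)$, and $q^{-e}$ for $(d_r)$ at $x$. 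The combinatorial inputs are: $|\syn{Z}(f)_{\scr = e}| \leq A q^e$ by Lang--Weil; the number of colliding pairs $(x_1,x_2)$ with $Y(x_1) = Y(x_2)$ and $\dg(x_i) = e$ is also $\leq A q^e$, since each fibre of $Y_{|\syn{Z}(f)}$ has at most $m$ points; and the number of $x \in \syn{Z}(f)_{\scr = e}$ with $\dg(Y(x)) < e$ is $O(q^{e/2}\log e)$, because such $x$ correspond to non-linear irreducible factors of $f(X, y_0)$ over $F(y_0)$ for $y_0$ of degree a proper divisor of $e$. Summing over $r_0 < e \leq (d-c)/3$, the total upper density of failure is bounded by
\[
\sum_{e > r_0} 2A q^{-e} + \sum_{e > r_0} A q^{-e/2}\log e,
\]
which tends to $0$ as $r_0 \to \infty$.

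Finally, choosing $c$ large enough to secure surjectivity of the evaluation maps and then $r_0$ large enough that the above tail sum is less than $\delta_0/2$, the lower density of $\sP$ is at least $\delta_0/2 > 0$. I expect the main obstacle to be the careful bookkeeping needed to coordinate the interpolation codimension $C_1(r_0)$ coming from (I)--(III) with the surjectivity required at medium-degree points: after pinning down the low-degree jets of $\phi$, one must verify that the remaining linear freedom in $A_{\scr \leq d}$ still makes the joint evaluation maps surjective, uniformly in $x$ of degree $\leq (d-c)/3$, so that the sharp $q^{-2e}$ and $q^{-e}$ estimates continue to hold on the relevant affine coset. This is a standard adaptation of Poonen's sieve template, but the interplay between the CRT-imposed data and the subsequent sieve conditions is the technical heart of the argument.
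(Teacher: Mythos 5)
Your proposal follows essentially the same Poonen-style sieve argument as the paper: pin down $\phi$ on the finitely many closed points of $\syn{Z}(f)_{\scr\leq r_0}$, estimate the failure densities of $(b_r),(c_r),(d_r)$ over the medium-degree range $r_0<\dg(x)\leq (d-c)/3$ using Lang--Weil and the fact that $f$ is monic in $X$ of degree $m$, and choose $r_0$ so that the total failure is a fraction strictly less than one of the base density. The only organizational differences from the paper are cosmetic: you place the non-vanishing of $\partial\phi/\partial X$ at the points of $T$ among the base conditions (III) rather than absorbing it into the $\delta_b$ error term, and you parametrize the $(d_r)$-count by $\dg(x)$ instead of $\dg(Y(x))$ --- both yield geometric tails in $r_0$, and the logical order of choices (first $r_0$ via the tail condition, then $c$ as the dimension of the imposed low-degree jet data so Poonen's surjectivity lemma applies up to degree $(d-c)/3$) is the same once the phrasing is untangled.
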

 \begin{proof}
By Lang-Weil estimates \cite{lw}  there exists $c'\in \N$  such that for all $n\geq 1$, 
$$ \hash  \big( \syn{Z}(f)_{\scr = n} \big) \leq c'\cdot q^{n}.$$

 For reasons which be clear during the course of the calculations below, we choose $r_0$ and $c$ as follows. {Recall that $m$ is the $X$-degree of $f$.} Let $r_0$ be any integer satisfying 
\begin{enumerate}
\item[(i)] $r_0  > {\rm max} \big\{ \dg(p), \dg(p_1),...,\dg(p_t), \dg(q_1),..., \dg(q_s)\big\}$.
\item[(ii)] $\displaystyle{\Bigg( \sum _{i>r_0/m} \frac{1}{q^{i}} \Bigg)\cdot \Bigg( c'+ \binom{m}{2} + \frac{m}{2} \Bigg)  <1-\sum_{x\in T}q^{- \dg(x)}.}$ 
\end{enumerate}
Note that it is always possible to ensure (ii) as $$ \Bigg(\sum _{i>r_0/m} \frac{1}{q^{i}}\Bigg) \to 0 \ \ \ \text{as} \ \ \ r_0\to \infty $$
and as degrees of points in $T$ are distinct we have 
$$ \sum_{x\in T}q^{-\dg(x)} < \sum_{i=1}^\infty q^{-i} \leq 1.$$ 
Let 
$$ c = \sum_{x\in \syn{Z}(f)_{\scr \leq r_0}} \dg(x). $$
     
Let $d\geq c+2r_0$ be any integer and $r:=(d-c)/{3}$. Let 
\begin{align*}
\sT & := \big\{ \phi \in A_{\scr \leq d} \ | \ \phi \mbox{ satisfies (a) } \ \text{and} \ \phi(x)=1 \ \forall \   x\in \syn{Z}(f)_{\scr \leq r_0} \backslash T \big\}. \\
\sT_b & := \big\{\phi \in \sT \ | \ \phi \ \text{does not satisfy} (b_{r}) \big\} \\
\sT_c & := \big\{\phi \in \sT \ | \ \phi \ \text{does not satisfy} (c_{r}) \big\} \\
\sT_d & := \big\{\phi \in \sT \ | \ \phi \ \text{does not satisfy} (d_{r}) \big\} 
\end{align*}
Let 
$$\delta  := \frac{\hash   \sT }{\hash  A_{\scr \leq d}}, \ \ 
\delta_b  := \frac{\hash \sT_b}{\hash  A_{\scr \leq d}}, \ \ 
\delta_c  := \frac{\hash \sT_c}{\hash  A_{\scr \leq d}}, \ \ 
\delta_d  := \frac{\hash \sT_d}{\hash  A_{\scr \leq d}}. 
$$ 
\\ \\
In the following steps we will estimate $\delta,\delta_b,\delta_c,\delta_d$. \\  \\
\noindent $\Step{1}:$ (Estimation for $\delta$) :
Note that the condition that $\phi$ belongs to $\sT$ depends solely on the image of $\phi$ in the zero dimensional ring 
$$ \prod_{x\in \syn{Z}(f)_{\leq{r_0}}} (\sO_x/m_x). $$ 
Since the {dimension over $F$} of the above ring is $c$ and since $d\geq c$, by \cite[Lemma 2.1]{poonen} the map 
$$ A_{\scr \leq d} \stackrel{\rho}{\longrightarrow} \prod_{x\in \syn{Z}(f)\leq{r_0}} (\sO_x/m_x)$$
is surjective. One can easily see that $\sT$ is a coset of $\Ker(\rho)$. Therefore 
$$ \delta = \prod_{x\in \syn{Z}(f)_{\scr \leq r_0}}q^{-\dg(x)}.$$   \\

\noindent $\Step{2}:$ (Estimation for $\delta_b$) :
Let $x \in \syn{Z}(f)_{\scr \scr \leq r}$ where recall that $r=(d-c)/{3}$.  The following are equivalent :
\begin{enumerate}
\item[(i)] $\phi\in \sT$ and $\phi(x)=0$ and $\frac{\partial \phi}{\partial X}(x)=0$.
\item[(ii)] $\phi  \in \sT$ and $\phi \ \syn{mod} \ {\mathfrak m}_x^2$ lies in the kernel of the linear map $\frac{\partial}{\partial X}: \frac{{\mathfrak m}_x}{{\mathfrak m}_x^2} \to F(x).$ 
\end{enumerate}

Let us first consider the case when $\dg(x)>r_0$. In this case, 
each of the above condition for $\phi$ depends only on its image in the zero dimensional ring 
$$ \Bigg(\prod_{\substack{ q\in \syn{Z}(f)_{\scr \leq{r_0}}} } (\sO_q/m_q) \Bigg)\times (\sO_x/m_{x}^2) .$$ 
The cardinality of the above ring is $$\bigg(\prod_{y\in \syn{Z}(f)_{\scr \leq r_0}} q^{\dg(y)}\bigg) \cdot q^{3 \dg (x)}.$$
Let us call an element $\xi$ in the above ring as a favorable value iff all $\phi$ mapping to $\xi$ satisfy the above conditions. It is an easy exercise to check that the set of all favorable values has cardinality $q^{\dg(x)}$. 
Thus the ratio of the number of favorable values to the cardinality of the ring is nothing but $\delta q^{-2 \dg(x)}$. 
 The {dimension over $F$} of this ring is $c+3 {\cdot \dg(x)}$. Since $d\geq c+3{\cdot\dg(x)}$, by \cite[Lemma 2.1]{poonen}, $A_{\scr \leq d}$ surjects onto this ring. Due to this, the ratio of $\phi \in A_{\scr \leq d}$ satisfying the above two conditions to the $\# A_{\scr \leq d}$ is nothing but $\delta q^{-2 \dg(x)}$. 
In other words, 
$$ \frac{ \hash  \big\{ \phi \in \sT \ | \ \phi(x)=0, \ \frac{\partial \phi}{\partial X}(x)=0 \big\}}{\hash  A_{\scr \leq d}} = \delta \cdot q^{-2 \dg(x)}.$$ 

Now let us consider the case where $\dg(x)\leq r_0$. We claim that unless $x\in T$, there is no $\phi \in \sT$ which vanishes on $x$. This follows from the definition of $\sT$. So let us assume $ x\in T$. In this case, the above two conditions for $\phi$ depend solely on the image of $\phi$ in the ring 
$$ \Bigg(\prod_{\substack{ q\in \syn{Z}(f)_{\scr \leq{r_0}}\\ q\neq x }  } (\sO_q/m_q) \Bigg)\times (\sO_x/m_{x}^2) .$$ 
Proceeding in a manner similar to the case where $\dg(x)>r_0$, we find that for $x\in T$, 
$$ \frac{ \hash  \big\{ \phi \in \sT \ | \ \phi(x)=0, \ \frac{\partial \phi}{\partial X}(x)=0 \big\}}{\hash  A_{\scr \leq d}} = \delta \cdot q^{- \dg(x)}.$$

Since 
$$ \sT_b = \Union_{\substack{x \in \syn{Z}(f)_{\scr \leq r}  \text{ such that} \\ \ x\in T \ {\text or} \ \dg(x)> r_0}} \hspace{-5mm} \big\{ \phi \in \sT \ | \ \phi(x)=0, \ \frac{\partial \phi}{\partial X}(x)=0 \big\}$$

we get an estimate 
\begin{align*} \delta_b & \leq      \sum_{x\in T} \delta q^{- \dg(x)}  + \sum_{\substack{x\in \syn{Z}(f)_{\scr \leq r} \text{ such that} \\\dg(x)>r_0}} \delta \cdot q^{-2 \dg(x)}  \\
  & \leq  \delta \bigg(  \sum_{x\in T}  q^{- \dg(x)}+ \sum_{r_0<i\leq r} c'q^{-i}\bigg) 
\end{align*}\\
where recall that $c'$ was the constant in Lang-Weil estimates such that $\# \syn{Z}(f)_{=n} \leq c' q^n$. \\

\noindent $\Step{3}:$ (Estimation for $\delta_c$): Let $y\in \A^1_F$ with $i:=\dg(y)\leq r$. Let
$$ \sT_c^y := \Big\{ \phi\in \sT \ | \ \exists \ \text{distinct} \  x_1, x_2 \in \syn{Z}(f)_{=i} \ \text{with}\  Y(x_1)=Y(x_2)=y \ \text{and}  \ \phi(x_1)=\phi(x_2)=0\Big\}.$$
First, note that $\sT_c^y$ is empty unless $i>r_0$. This is because the only points of degree $\leq r_0$ on which a $\phi \in \sT$ vanishes are the points in $T$. However, by choice, all points in $ x \in T$ have different degrees and satisfy $\deg(x)=\deg(Y(x))$. Thus, let us assume $i>r_0$. 
In this case, we claim that 
$$ \frac{ \hash  \sT_c^y } {\hash  A_{\scr \leq d}} \leq \delta \cdot {m\choose2}\cdot q^{-2i}.$$
For fixed $x_1,x_2$ with $Y(x_1)=Y(x_2)=y$, 
$$ \Big\{ \phi\in \sT \ | \ \phi(x_1)=\phi(x_2)=0  \Big\}$$
 is a coset of the kernel of the following  map 
$$ A_{\scr \leq d} \longrightarrow \bigg( \prod_{q\in \syn{Z}(f)_{\scr \leq{r_0}}} (\sO_q/m_q) \bigg) \times (\sO_{x_1}/m_{{x_1}}) \times (\sO_{x_2}/m_{{x_2}}) $$
which is surjective by \cite[2.1]{poonen}. Thus  
$$ \frac{ \hash  \Big\{ \phi\in \sT \ | \ \phi(x_1)=\phi(x_2)=0.\Big\}} {\hash  A_{\scr \leq d}} \leq \delta \cdot q^{-2i}.$$
To prove the claim we now simply observe that 
since $f$ is monic in $X$ of degree $m$  there are atmost $m\choose{2}$ possible choices for a pair $\{x_1,x_2\}$ as above. \\

\noindent  As discussed above, since $\sT_c^y$ is empty unless $i>r_0$, we have  
$$ \sT_c = \Union_{\substack{y\in \A^1_F \\ r_0< \dg(y)\leq r}} \sT_c^y.$$
For a fixed $i$, 
$$ \hash \big\{ y\in \A^1_F \ | \ \dg(y)=i\big\} \leq q^i.$$
 From this, it is elementary to deduce  
$$ \delta_c=\frac{\hash  \sT_c}{\hash A_{\scr \leq d}} \leq \delta \Big(  \sum_{r_0<i\leq r} {m \choose 2}  q^{-i}\Big).$$\\

\noindent $\Step{4}:$ (Estimation for $\delta_d$): As in the above step, let $y\in \A^1_F$ with $i:=\dg(y)\leq r$. Let
$$ \sT_d^y := \Big\{ \phi\in \sT \ | \ \exists \ x\in \syn{Z}(f)_{\scr \leq r} \ \text{with} \ \phi(x)=0 \ , \ Y(x)=y  \ \text{and}\ \dg(x)> i .\Big\}.$$

We first claim that $\sT_d^y$ is empty unless $\dg(y)>r_0/m$.  Otherwise, there would exist a $\phi\in \sT$ and an $x\in \syn{Z}(f)_{\scr \leq r}$ with $Y(x)=y$, $\phi(x)=0$ and $\dg(x)> \dg(y)$. But as $f$ is monic in $X$ of degree $m$, the maximum degree of a point $x$ lying over $y$ is $m \cdot \dg(y)\leq r_0$. Which means $x\in \syn{Z}(f)_{\scr \leq r_0}$. However as $\phi \in \sT$,  the only points in $\syn{Z}(f)_{\scr \leq r_0}$ on which  $\phi$ vanishes are those in $T$. Thus $x\in T$. But by \eqref{notationa2}(5)(c), for such $x$, $\dg(Y(x))=\dg(y)=\dg(x)$ which is a contradiction. \\

 We will now estimate 
$$ \frac{ \hash  \sT_d^y } {\hash  A_{\scr \leq d}}.$$

Fix a point  $x\in \syn{Z}(f)_{\leq r}$ with $\dg(x)>i$ and $Y(x)=y$. For this $x$, we first note that 
because of \eqref{notationa2}(5)(c), $x\notin T$. 

For $\dg(x)>r_0$ we note that  
$$ \frac{ \hash  \Big\{ \phi\in \sT \ | \ \phi(x)=0.\Big\}} {\hash  A_{\scr \leq d}} \leq \delta \cdot q^{-\dg(x)}\leq \delta \cdot q^{-2i}.$$
This is deduced, as before, from the surjectivity of  
$$ A_{\scr \leq d} \longrightarrow \bigg(\prod_{q\in \syn{Z}(f)_{\scr \leq{r_0}}} (\sO_q/m_q) \bigg)\times (\sO_{x}/m_{{x}})$$

If $\dg(x)\leq r_0$,  $\Big\{ \phi\in \sT \ | \ \phi(x)=0\Big\}$ is empty as there is no points in $\syn{Z}(f)\backslash T$ on which a $\phi \in \sT$ vanishes.  And hence, the above estimate trivially holds in this case also. 

As $f$ is monic in $X$ of degree $m$, and $\dg(x)\geq 2i$, there are at most $\frac{m}{2}$ possible choices for  $x\in \syn{Z}(f)$ such that $Y(x)=y$. This shows that 
$$ \frac{ \hash  \sT_d^y } {\hash  A_{\scr \leq d}} \leq \delta \cdot \frac{m}{2}\cdot q^{-2i}.$$

Since, as discussed above, $\sT_d^y$ is empty unless $\dg(y)>r_0/m$, we have
$$ \sT_d = \Union_{y\in (\A^1_F)_{\scr \geq r_0/m}} \sT_d^y.$$

For a fixed $i$, 
$$ \hash \big\{ y\in \A^1_F \ | \ \dg(y)=i\big\} \leq q^i.$$
 Thus 
$$ \delta_d=\frac{\hash  \sT_d}{\hash A_{\scr \leq d}} \leq \delta \Big(  \sum_{r_0/m<i\leq r} \frac{m}{2}\   q^{-i}\Big).$$
\\
\noindent $\Step{5}:$ (Estimation for $\sP$):
If we let 
$$ \sP_d := \Big\{ \phi \in A_{ \scr \leq d} \ | \  \phi \ {\rm satisfies}\  (a), (b_{\scriptscriptstyle (d-c)/{3}}), (c_{\scriptscriptstyle (d-c)/{3}}), (d_{\scriptscriptstyle (d-c)/{3}}) \ {\rm and} \ \phi(x)=1 \ \forall \   x\in \syn{Z}(f)_{\scr \leq r_0} \backslash T   \Big\},$$
then $$\sP_d = \sT \backslash(\sT_b\cup \sT_c \cup \sT_d). $$
Therefore 
\begin{align*}
\frac{\hash  \sP_d}{\hash  A_{\scr \leq d}} & \geq \delta - \delta_b- \delta_c -\delta_d \\
     & \geq \delta \Bigg[1 - \sum_{x\in T}q^{- \dg(x)} - \sum_{r_0<i\leq r} c'q^{-i}- \sum_{r_0<i\leq r} {m \choose 2}  q^{-i} - \sum_{r_0/m<i\leq r} \frac{m}{2}\   q^{-i} \Bigg] \\
     & \geq \delta \Bigg[ 1- \sum_{x\in T}q^{- \dg(x)} - \Bigg( \sum _{r_0/m<i \leq r} \frac{1}{q^{i}} \Bigg)\cdot \Bigg( c'+ \binom{m}{2} + \frac{m}{2} \Bigg) \Bigg]     
\end{align*}
Note that in the above expression $r=(d-c)/{3}$. As $d\to \infty$, so does $r$. Hence we observe that 
$$ \syn{inf} \Big( \frac{\hash  \sP_d}{\hash A_{\scr \leq d}} \Big) \geq \delta \Bigg[ 1- \sum_{x\in T}q^{- \dg(x)}- \Bigg( \sum _{i>r_0/m} \frac{1}{q^{i}} \Bigg)\cdot \Bigg( c'+ \binom{m}{2} + \frac{m}{2} \Bigg) \Bigg]  $$
which is positive, thanks to the definition of $r_0$. Thus the lower density of 
$$\sP= \Union_d \sP_d$$ is positive as required.       
  \end{proof}
 
\begin{lemma} \label{findh} Let $c$ be as in Lemma \ref{findg} and let $$\sQ:=\bigcup_{d\geq 0} \Big\{\phi\in A_{\scr \leq d} \ |\ \exists\  x\in \syn{Z}(f)_{\scr >  (d-c)/{3}} \ {\rm  such \ that } \ \phi(x)= \frac{\del \phi}{\del X}(x)= 0\Big\}.$$ Then ${\mu}(\sQ)=0.$
\end{lemma}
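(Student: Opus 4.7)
The plan is to translate the two vanishing conditions into a divisibility statement, bound the kernel of an evaluation map using Poonen's Lemma~2.1, and then sum geometrically over the degree $i=\dg(x)$.

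First I would make a geometric reformulation. Set $y:=Y(x)$, $j:=\dg(y)$, and $m_x:=[F(x):F(y)]$, so that $i=jm_x$, and let $g(X)\in F(y)[X]$ be the minimal polynomial of $X(x)$ over $F(y)$; since $F$ is perfect, $g$ is separable. Writing $\phi(X,\bar y)=g(X)h(X)$ using $\phi(x)=0$, the further hypothesis $\partial_X\phi(x)=0$ combined with $g'(X(x))\neq 0$ forces $g\mid h$, so
$$\phi(x)=\frac{\partial\phi}{\partial X}(x)=0 \iff g(X)^2 \mid \phi(X,\bar y) \text{ in } F(y)[X].$$
Equivalently, the bad $\phi\in A_{\scr \leq d}$ attached to $x$ form the kernel of the ring map $A\twoheadrightarrow R_x:=F(y)[X]/g(X)^2$, which is itself a quotient $F[X,Y]/(p(Y),G(X,Y)^2)$ of $A$ (with $p$ the minimal polynomial of $\bar y$ and $G$ any lift of $g$); note $\dim_F R_x=2i$. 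Applying Poonen's Lemma~2.1 to this quotient: for $d\ge 2i$ the evaluation $A_{\scr \leq d}\to R_x$ is surjective, so the bad set for $x$ has exactly $|A_{\scr \leq d}|/q^{2i}$ elements.

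In the range $(d-c)/3<i\le d/2$, this bound combined with a Lang--Weil type estimate $|\syn{Z}(f)_{=i}|\le c'q^i/i$ gives
$$\frac{1}{|A_{\scr \leq d}|}\sum_{(d-c)/3<i\le d/2}\ \sum_{x\in\syn{Z}(f)_{=i}}\bigl|\ker(A_{\scr \leq d}\to R_x)\bigr| \;\le\; c'\sum_{i>(d-c)/3}\frac{1}{iq^i},$$
a geometric tail of order $q^{-(d-c)/3}/(d-c)$ that tends to $0$ as $d\to\infty$.

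The main obstacle is the residual range $i>d/2$, where Poonen's hypothesis fails, and I would attack it in two stages. First, the surjectivity of $A_{\scr \leq d}\to R_x$ in fact holds under the weaker hypothesis $d\ge j+2m_x-2$: for each of the $2m_x$ basis vectors $X^a$ ($0\le a<2m_x$) of $R_x$ over $F(y)$, the powers $\bar y^b$ with $b\le d-a$ span $F(y)$ as soon as $d-a\ge j-1$. Since $m_x\le m$ (the $X$-degree of $f$), this covers all $x$ with $i\le m(d-2m+2)$, and the same bound $|A_{\scr \leq d}|/q^{2i}$ carries over to give another geometric tail in $i$. Second, for the narrow sliver where $j>d-2m+2$, I would appeal to Bezout: for any $\phi$ coprime to $f$, $\sum_{x\in \syn{Z}(\phi,f)}\dg(x)\le md$, so the number of $x\in\syn{Z}(\phi,f)$ of degree exceeding $(d-c)/3$ contributed by a single $\phi$ is $O(m)$; coupled with the extra codimension-$\dg(x)$ constraint $\partial_X\phi(x)=0$, this sliver too contributes $o(|A_{\scr \leq d}|)$. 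The hardest step will be making this last contribution rigorous in the worst case $m_x=1$, where the refined surjectivity argument degrades fully and one must rely on Bezout together with a careful accounting of how often $\partial_X\phi(x)=0$ as $\phi$ varies.
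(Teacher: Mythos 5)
Your approach is genuinely different from the paper's, which reproduces Poonen's Lemma~2.6 verbatim: the paper writes $\phi=\phi_0+g^pX+h^p$ so that $\partial\phi/\partial X=\partial\phi_0/\partial X+g^p$ is independent of $h$. This decoupling is the crux: first $\phi_0,g$ are chosen so that the scheme $W_1=\syn{Z}(f,\partial\phi/\partial X)$ is \emph{finite} (of size $O(d)$ by Bezout), and only then is a union bound over $h$ taken, over the $O(d)$ points of $W_1$, using \cite[Lemma~2.5]{poonen}. Your geometric reformulation --- that $\phi(x)=\partial_X\phi(x)=0$ iff $g(X)^2\mid\phi(X,\bar y)$, so the bad $\phi$ form $\ker\bigl(A_{\scr\leq d}\to R_x\bigr)$ with $\dim_F R_x=2i$ --- is correct, and your refined surjectivity claim ($A_{\scr\leq d}\twoheadrightarrow R_x$ whenever $d\geq j+2m_x-2$) is also correct and does extend the valid range beyond Poonen's crude $d\geq 2i$. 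But this only pushes the threshold out to roughly $j\approx d$; it does not close the gap.

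The sliver $j>d-2m+2$ is a genuine gap, not just the ``hardest step.'' A union bound over bad points of degree $i\gg d$ necessarily diverges: even your sharpest codimension estimate in the worst case $m_x=1$ gives $\mu\geq 2d+1$, so the bad-set proportion per point is $\leq q^{-(2d+1)}$, while the number of degree-$i$ points of $\syn{Z}(f)$ is of order $q^i$; the sum $\sum_{i>2d}q^{i-(2d+1)}$ diverges. Bezout does not rescue this, because the bound $\sum_{x\in\syn{Z}(\phi,f)}\dg(x)\leq md$ controls the candidate bad points \emph{for each fixed} $\phi$, not a density over $\phi$: the candidate set varies with $\phi$, so ``$O(m)$ candidates per $\phi$'' gives no upper bound on $\hash\{\phi: \text{some candidate is bad}\}$. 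This is precisely the obstruction that Poonen's $p$-th power decomposition is designed to remove --- it turns the $\phi$-dependent candidate set into the fixed finite scheme $W_1$, determined before $h$ is chosen. Without an argument of that kind, the proposal does not prove $\mu(\sQ)=0$.
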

\begin{proof} The proof is identical to that of \cite[2.6]{poonen}. We reproduce the argument verbatim here for the convenience of the reader. We will bound the probability of  $\phi$ constructed as 
 $$\phi=\phi_0+g^p X+h^p$$ and for which there is a point $x\in \syn{Z}(f)_{\scr >  (d-c)/{3}}$ with $\phi(x)= \frac{\del \phi}{\del X}(x)= 0$.  Note that if $\phi$ is of the above form, then $$ \frac{\partial \phi}{\partial X}=\frac{\partial \phi_0}{\partial X}+g^p.$$ Further, if 
$\phi_0\in A_{\scr \leq d}$, $g \in A_{\scr \leq d-1/p}$ 
 and $h\in A_{\scr \leq d/p}$, then $\phi\in A_{\scr \leq d}$.
  Define $$W_0:=\syn{Z}(f) \ \ \ \text{and} \ \ \ W_1:=\syn{Z}\Big(f,\frac{\del \phi}{\del X}\Big).$$ 
Note that $\syn{dim}(W_0)=1$.   \\
Let  $$ \gamma:= \rdown{\frac{d-1}{p}} \ \ \ \text{and} \ \ \ \eta= \rdown{\frac{d}{p}}.$$

\noindent Claim 1:  The probability (as a function of $d$) of choosing $\phi_0 \in A_{\scr \leq d}$ and $g\in A_{\scr \leq (d-1)/p}$   such that $\syn{dim}(W_1)=0$ is $1-o(1)$ as $d\to \infty$.\\ \\
 Let $V_1,...,V_{\ell}$ be $F$ irreducible components of $W_0$. Clearly $\ell\leq \dg(f)$ (where $\dg(f)$ is the total degree). Since the projection onto the $Y$ coordinate is finite on $\syn{Z}(f)$ (by \eqref{notationa2}(2)), we know that $Y(V_k)$ is of dimension one for all $k$. 
We will now bound the set 
$$ G_k^{\rm bad} := \Big\{ g \in A_{\scr \leq \gamma} \ | \ \frac{\partial \phi}{\partial X}= \frac{\partial \phi_0}{\partial X}+g^p \ \text{vanishes identically on}\  V_k \Big\}.$$
If $g,g'\in G_k^{\rm bad}$, then $g-g'$ vanishes on $V_k$. Thus if $G_k^{\rm bad}$ is non-empty, it is a coset of the subspace of functions in $A_{\scr \gamma}$ which vanish identically on $V_k$. The codimension of that subspace, or equivalently the dimension of the image of $A_{\scr \gamma}$ in the regular functions on $V_k$ is at least $\gamma+1$, since no polynomial in $Y$ vanishes on $V_k$. Thus the probability that $\frac{\partial \phi}{\partial X}$ vanishes on $V_k$ is at most $q^{-\gamma-1}$. Thus, the probability that $\frac{\partial \phi}{\partial X}$ vanishes on some $V_k$ is at most $\ell q^{-\gamma-1}=o(1)$.  Since $\dim(W_1)=0$ iff $\frac{\partial \phi}{\partial X}$ does not identically vanish on any component $V_k$, the claim follows. \\

We will now estimate the probability of choosing $h$ such that there is no bad point in $Z(f)$, i.e. a point in $\syn{Z}(f)_{\scr > (d-c)/{3}}$ where both $\phi$ and $\frac{\partial \phi}{\partial X}$ vanish. Note that the set of such bad points is precisely 
$$ \syn{Z}(\phi)\intersection W_1 \intersection \syn{Z}(f)_{\scr > (d-c)/{3}}.$$
\noindent Claim 2: Conditioned on the choice of $\phi_0$ and $g$ such that $W_1$ is finite, the probability of choosing $h$ such that $$ \syn{Z}(\phi)\intersection W_1 \intersection \syn{Z}(f)_{\scr > (d-c)/{3}}=\emptyset$$ is 
$1-o(1)$ as $d\to \infty$. \\ \\ It is clear by the Bezout theorem that $\hash   W_1 = O(d)$. For a given $x\in W_1$, the set $$H^{\rm bad} = \big\{ h\in A_{\eta} \ | \  \phi=\phi_0+g^pX+h^p \text{ vanishes on } x \big\} $$
is either $\emptyset$ or a coset of $\Ker\big( A_{\scr \eta} \xrightarrow{ev_x} F(x)\big)$ where $F(x)$ is the residue field of $x$. For the purpose of this claim, we only need to consider $x$ such that  $\dg(x)>(d-c)/{3}$. In this case,  \cite[Lemma 2.5]{poonen} implies that 
$$ \frac{\hash H^{\rm bad}}{\hash A_{\scr \eta}} \leq q^{-\nu}  \ \ \ \text{where } \ \nu=\syn{min}(\eta+1,(d-c)/{3}).$$ Thus, the probability that both $\phi$ and $\frac{\partial \phi}{\partial X}$ vanish at such $x$ is at most $q^{-\nu}$. There are at most $\hash W_1$ many possibilities for $x$. Thus the probability that there exists a 'bad point', i.e. point in $x\in W_1$ with $\dg(x)> (d-c)/{3}$ such that both $\phi$ and $\frac{\partial \phi}{\partial X}$ vanish at such $x$ is at most $\big(\hash W_1\big)q^{-\nu} = O(dq^{-\nu})$. Since as $d\to \infty$, $\nu$ grows linearly in $d$, 
$O(dq^{-\nu})=o(1)$. In other words, the probability of choosing $h$ such that there is no bad point is $1-o(1)$. \\

Combining the above two claims, it follows that the probability of choosing $\phi=\phi_0+g^pX+h^p$ such that $$\syn{Z}(\phi)\intersection W_1 \intersection \syn{Z}(f)_{\scr > (d-c)/{3}}=\emptyset$$ is equal to 
$(1-o(1))(1-o(1))=1-o(1)$. This shows that $\mu(Q)=0$. 
 \end{proof}
  
 \begin{proof}[Proof of Lemma \ref{phiexists}]
 Let $\overline{Q}$ denote the complement of $Q$ in $A$. Let $\sP$ be as in Lemma \ref{findg}. To prove Lemma \ref{phiexists} we need to show that $\sP\intersection \overline{Q}$ is non-empty. However, combining the above two lemma's, we in fact get that 
 $\mu(\sP\intersection \overline{Q})>0$. This finishes the proof. 
 \end{proof}
 
  Condition (3) of Lemma \ref{phiexists} ensures that $Y:\A^2_F\to \A^1_F$ is radicial at $S_{\scr \leq  (d-c)/{3}}$.  We would have ideally liked to have $S$ instead of $S_{\scr \leq  (d-c)/{3}}$ here. If this was the case, and if $\phi_{ |Z(f)}$ was finite, as mentioned in Remark \ref{strategy}, we would be able to deduce that $(\phi,Y)$ presents $(W,\syn{Z}(f),p)$.  However we are unable to handle points in $S$ of degree greater than $ (d-c)/{3}$. In order to rectify that, we replace the map $(\phi,Y)$ with a map $(\phi, h)$ for a suitable $h$ as found by the following lemma. Finiteness of $\phi$ will be handled later using a Noether normalization argument.

\begin{lemma}\label{hexists} Let $c\in \N$ be as in Lemma \ref{phiexists}. Let $d>>0$ be an integer such that for every $i>(d-c)/{3}$, 
$$ \hash (\A^1_F)_{\scr =i} > dm. $$ 
Let  $\phi\in A_{\scr \leq d}$  be as given by  \eqref{phiexists} and $S:=\syn{Z}(\phi)\intersection Z$ . Then, there exists $h \in F[X,Y]$ such that
\begin{enumerate}
 \item $h_{|S} :S \rightarrow \A^1$ is radicial, i.e. injective and preserves the degree.
 \item The map $\A^2_F \xrightarrow{(\phi,h)} \A^2_F$ is \'{e}tale at all $x\in S$. 
 \item $h_{|\syn{Z}(f)}: \syn{Z}(f) \to \A^1_F$ is a finite map. 
\end{enumerate}
\end{lemma}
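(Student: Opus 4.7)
The plan is to construct $h$ in two stages. First I would use the Chinese Remainder Theorem to assemble an intermediate polynomial $h_1 \in F[X,Y]$ satisfying properties (1) and (2) at every point of $S$. Then I would set $h := h_1 - Y^\ell$ for $\ell$ a sufficiently large multiple of $\mathrm{char}(F)$, and invoke the Noether normalization trick \ref{nntrick1} to obtain finiteness (3) on $\syn{Z}(f)$ while arranging that the first two properties are preserved. The observation that makes this work is that raising to an $\ell$-th power with $\mathrm{char}(F)\mid\ell$ is invisible modulo $\mathfrak m_x^2$.

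For the first stage, I would assign to each $x \in S$ a target value $a_x \in F(x)$ which is a primitive element of $F(x)/F$, with the constraint that the corresponding closed points $\{a_x\} \subset \A^1_F$ are pairwise distinct (so that $h_{|S}$ is automatically injective and degree-preserving). For $x \in S_{\scr \leq (d-c)/3}$ set $a_x := Y(x)$: this is primitive and injective on such $x$ by condition (3) of Lemma~\ref{phiexists}. For $x \in S_{\scr > (d-c)/3}$ of degree $i$, the numerical hypothesis $\hash (\A^1_F)_{\scr =i} > dm$ combined with a Bezout-type bound $\hash S_{\scr =i} \leq dm$ (coming from $\deg\phi \leq d$ and $f$ monic in $X$ of degree $m$) furnishes enough degree-$i$ closed points of $\A^1_F$; for each one picks a primitive $a_x \in F(x) \cong \mathbb F_{q^i}$ realising it. Since $S$ is finite, the product map $F[X,Y] \to \prod_{x \in S}\sO_x/\mathfrak m_x^2$ is surjective, so I can choose $h_1 \in F[X,Y]$ with
$$ h_1 \equiv a_x + Y(x)^\ell + (Y - Y(x)) \pmod{\mathfrak m_x^2} \quad \text{for every } x \in S, $$
where $a_x$ and $Y(x)$ are lifted arbitrarily from $F(x)$ to $\sO_x$.

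Now set $h := h_1 - Y^\ell$. Since $\mathrm{char}(F) \mid \ell$, in $\sO_x/\mathfrak m_x^2$ the binomial expansion $Y^\ell = (Y(x) + (Y - Y(x)))^\ell$ collapses: the linear term carries a factor of $\ell$ and higher terms lie in $\mathfrak m_x^2$. Hence $Y^\ell \equiv Y(x)^\ell$ and therefore $h \equiv a_x + (Y - Y(x)) \pmod{\mathfrak m_x^2}$. This shows $h(x) = a_x$ and that the cotangent vector $dh$ at $x$ coincides with $dY$ at $x$. Condition (1) is then immediate from the construction of the $a_x$'s. Condition (2) follows because the Jacobian of $(\phi, h)$ at $x$ equals that of $(\phi, Y)$, which is invertible by condition (2) of Lemma~\ref{phiexists}. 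For condition (3), note that $f$ monic in $X$ of degree $m$ makes $Y_{\scr |\syn{Z}(f)}$ finite, so $(h_1, Y)_{\scr |\syn{Z}(f)}: \syn{Z}(f) \to \A^2_F$ is also finite (its post-composition with projection to the $Y$-factor is $Y_{\scr |\syn{Z}(f)}$); then Lemma~\ref{nntrick1} with $n=2$, $\phi_1 = h_1$, $\phi_2 = Y$, $Q(T) = T$ yields finiteness of $h = h_1 - Y^\ell$ on $\syn{Z}(f)$ for all sufficiently large $\ell$, and I can enlarge $\ell$ further while keeping it divisible by $\mathrm{char}(F)$ without affecting (1), (2).

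The main obstacle I expect is the counting in the first stage: producing the primitive elements $a_x$ at every high-degree point of $S$ with pairwise distinct minimal polynomials. This is precisely why the numerical hypothesis $\hash (\A^1_F)_{\scr =i} > dm$ appears in the statement — it is designed to dominate a Bezout-type bound $\hash S_{\scr =i} \leq dm$. Once this counting is in hand, the remainder is a routine combination of CRT, the characteristic-$p$ trick for the $\ell$-th power, and a single application of the Noether normalization trick.
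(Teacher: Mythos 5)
Your two-stage strategy — use CRT to build an intermediate polynomial achieving conditions (1) and (2) locally at each point of $S$, then apply the Noether normalization trick $\eqref{nntrick1}$ to force finiteness (3) while preserving (1) and (2) because the correction lies in $\mathfrak{m}_x^2$ — is exactly the paper's. Your handling of the low-degree points via condition (3) of Lemma~\ref{phiexists} and of the high-degree points via the count $\hash S_{\scr =i} \leq dm < \hash(\A^1_F)_{\scr =i}$ is also correct and matches the paper.

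The gap is in the finiteness step. You subtract $Y^\ell$, but $Y$ does not vanish on $S$, so $Y^\ell \notin \mathfrak{m}_x^2$; you compensate by inserting the constant $Y(x)^\ell$ into the CRT prescription for $h_1$. This makes $h_1$ depend on $\ell$. On the other hand, the threshold ``$\ell \gg 0$'' in the Noether normalization trick depends on $h_1$ (through the degree of the algebraic relation satisfied by $h_1$ and $Y$ on $\syn{Z}(f)$). So you must know $\ell$ to build $h_1$ and know $h_1$ to bound $\ell$. Your remark that one can ``enlarge $\ell$ further\ldots without affecting (1), (2)'' is false as stated: enlarging $\ell$ changes $Y(x)^\ell \in F(x)$ for $Y(x)\neq 0$, so the congruence you arranged for $h_1$ no longer cancels against $Y^\ell$, and $h(x)$ is no longer $a_x$. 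One can salvage this (bound $\deg h_1$ using \cite[Lemma 2.1]{poonen} so there are only finitely many candidate $h_1$'s and hence a uniform Noether threshold, or constrain $\ell$ modulo the lcm of the orders of the $Y(x)$ in $F(x)^\times$), but that argument must actually be made.

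The paper sidesteps the circularity entirely: instead of $Y^\ell$ it subtracts $\beta^{p\ell}$ where $\beta\in F[Y]$ is any non-constant polynomial \emph{vanishing on $S$}. Then $\beta\in\mathfrak{m}_x$ for every $x\in S$, so $\beta^{p\ell}\in\mathfrak{m}_x^2$ for every $\ell\geq 1$ automatically, with no $\ell$-dependent term in the CRT data. Since $\beta$ is still a polynomial in $Y$ and $Y_{\scr|\syn{Z}(f)}$ is finite, $\beta_{\scr|\syn{Z}(f)}$ is finite and the Noether trick applies as before. Replacing your $Y$ by such a $\beta$ repairs the argument and renders it essentially identical to the paper's.
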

\begin{proof} \underline{Step(1)}: In this step we will show that with the given choice of $d$, it is possible to choose $h_1$ which satisfies condition (1) of the Lemma. \\
 We claim that 
$$ \hash S_{=i} \leq \hash (\A^1_F)_{\scr =i} \ \ \forall \ i\geq 1.$$ 
As explained in Remark \ref{sfinite}, $\syn{Z}(\phi)\intersection \syn{Z}(f)$ is finite. By Bezout theorem, $\hash S\leq \dg(\phi)\dg(f) = dm$. Thus the above claim holds for all $i> (d-c)/{3}$ by the choice of $d$. On the other hand, the claim also holds for $i\leq (d-c)/{3}$, since by Lemma \ref{phiexists}, $Y$ is radicial at $S_{\scr \leq (d-c)/{3}}$. Thus we can choose a set theoretic map $S \xrightarrow{\tilde{h}} \A^1_F$ which is injective and preserves degree of points. By Chinese remainder theorem, there exists an $h_1\in F[X,Y]$ such that for all $x\in S$
$$ h_1(x)= \tilde{h}(x).$$\\

\noindent \underline{Step(2)}: Now, using the $h_1$ from above step, we will find a $h_2\in F[X,Y]$ which satisfies conditions (1) and (2) of the Lemma. 
It is sufficient to find an $h_2\in F[X,Y]$ such that 
\begin{align*}
(i) \ \  h_2  \equiv h_1  &\ \ \ \ \syn{mod} \ {\mathfrak m}_x \ \ \forall \ x\in S \\
(ii) \ \ \frac{\partial h_2}{\partial X}(x) & = 0  \ \ \ \forall\  x\in S \\
(iii) \ \ \frac{\partial h_2}{\partial Y}(x) & = 1 \ \ \ \forall \ x\in S
\end{align*} 

First, we claim that for any closed point $x\in \A^2_F$,  there exists an $h_x \in F[X,Y]$ such that 
\begin{align*}
 \ \  h_x  \equiv h_1  &\ \ \ \ \syn{mod} \ {\mathfrak m}_x  \\
 \ \ \frac{\partial h_x}{\partial X}(x) & = 0   \\
 \ \ \frac{\partial h_x}{\partial Y}(x) & = 1 
\end{align*}
We choose a polynomial $f_1\in F[X]$ such that $f_1(x)=0$ and $\partial f_1/\partial X(x)\neq 0$. To see that such a choice is possible, let $\pi_1:\A^2_F\to \A^1_F$ be the projection on to the $X$-coordinate. The minimal polynomial of any primitive element of the residue field of $\pi_1(x)$ satisfies our requirement. Similarly, we choose $f_2\in F[Y]$ such that $f_2(x)=0$ and $\partial f_2/\partial Y(x)\neq 0.$ Using Chinese remainder theorem and the fact that the residue field $F(x)$ is perfect, we choose $g_1,g_2 \in F[X,Y]$ such that 
$$g_1^p(x) = - \frac{ \partial h_1/ \partial X(x)}{\partial f_1/ \partial X(x)},$$
$$g_2^p(x) =  \frac{ \big(1-\partial h_1/ \partial Y(x)\big)}{\partial f_2/ \partial Y(x)}.$$
We leave it to the reader that 
$$ h_x = h_1 + g_1^p f_1 + g_2^p f_2 $$
satisfies the requirement of our claim. 
Now, by Chinese remainder theorem, there exists $h_2\in F[X,Y]$ such that 
$$ h_2 \equiv h_x \ \ \ \syn{mod} \ {\mathfrak m}_x^2 \ \ \forall \ x\in S.$$
It is straightforward to see that $h_2$ satisfies conditions (1) and (2) of the Lemma. \\

\noindent \underline{Step (3)}: Choose a non-constant polynomial $\beta \in F[Y]$ such that 
$\beta(x)=0$ for all $x\in S$. Since $f$ is monic in $X$,  $\syn{Z}(f)\xrightarrow{Y} \A^1_F$ is a finite map. Thus $\beta :\syn{Z}(f) \to \A^1_F$ is also a finite map. As $\syn{dim}(\syn{Z}(f))=1$, for a sufficiently large integer $\ell$, $$h:=h_2-\beta^{p\ell}$$
defines a finite map $\syn{Z}(f) \xrightarrow{h} \A^1_F$ by Noether normalization trick (see \eqref{nntrick1}).   Clearly $h$ continues to satisfy conditions (1) and (2) of the Lemma since $\beta^{p\ell} \in {\mathfrak m}_x^2$ for all $x\in S$. 

\end{proof}

\begin{proof}[Proof of theorem \ref{dtwo}]
 Let $\phi,h$ be as in Lemmas \ref{phiexists} and \ref{hexists} respectively. Let  $\widetilde{\Phi}$ be the map $\A^2_F \xrightarrow{(\phi,h)} \A^{2}_F$, and $\widetilde{\Psi}:=\phi$. Recall that $\tilde{S}:= \phi^{-1}(0)\intersection \syn{Z}(f)$ (with reduced scheme structure). By Remark \ref{sfinite} it is finite. 

\noindent \underline{Step 1}:  We claim that there exists a $g\in F[X,Y]$ such that if $W_g:=\A^2_F\backslash \syn{Z}(g)$, then  $\widetilde{\Phi}(\tilde{S})\subset W_g$ and  
$$ \widetilde{\Phi}_{|\widetilde{\Phi}^{-1}(W_g)\intersection \syn{Z}(f)} : \widetilde{\Phi}^{-1}(W_g)\intersection \syn{Z}(f) \longrightarrow W_g$$ 
is a closed immersion. The proof of this claim is a repetition of the argument in \cite[3.5.1]{chk} (see also \eqref{nakarg}). Let $\{p, x_1,...,x_n\}$ be the set of points in $\tilde{S}$. Since $\widetilde{\Phi}$ is \'{e}tale and radicial at all points of $\tilde{S}$ (see \eqref{phiexists}(2) and \eqref{hexists}(1) ) we have 
$ \widetilde{\Phi}^{-1}(\widetilde{\Phi}(\tilde{S})) \to \A^2_F$ is a closed immersion.  Let $y_0,...,y_n$ be the points in $\widetilde{\Phi}(\tilde{S})$. Let $\eta_i$ be the maximal ideal in $F[X,Y]$ corresponding to the closed point $y_i$. Thus the above closed immersion gives us a surjective map 
$$ F[X,Y] \twoheadrightarrow  \frac{ F[\syn{Z}(f)]}{ \eta_0\cdots \eta_n}$$
where $F[\syn{Z}(f)]$ is the coordinate ring of $\syn{Z}(f)$. 
If $C$ denotes the cokernel of $F[X,Y] \to F[\syn{Z}(f)]$ (as $F[X,Y]$ modules), then the above surjective map implies that $$C \tensor \frac{F[X,Y]}{\eta_0\cdots \eta_n} = 0 .$$
Note that $\widetilde{\Phi}_{| \syn{Z}(f)}$ is a finite map, since $h$ is a finite map (\eqref{hexists}(3)). Thus $F[\syn{Z}(f)]$ is a finite $F[X,Y]$ module.  Thus, by Nakayama's lemma, there exists an element $g \in F[X,Y]$ such that 
$g \notin \eta_0\cdots \eta_n $ and $C_g=0$. In other words, the map 
$$ F[X,Y]_g \twoheadrightarrow  F[\syn{Z}(f)]_g$$ 
is surjective. This proves the claim since if $W_g:= \A^2_F \backslash \syn{Z}(g)$, the above surjectivity is equivalent to the following being a closed immersion 
$$ \widetilde{\Phi} : \syn{Z}(f) \intersection \widetilde{\Phi}^{-1}(W_g)  \to W_g.$$ \\

\noindent \underline{Step 2}: Let $E$  be the smallest closed subset of $\syn{Z}(f)$ satisfying the following three conditions 
\begin{enumerate}
\item[(i)] $x\in E$ if $x\in \syn{Z}(f)$ and $\widetilde{\Phi}$ is not \'{e}tale at $x$.
\item[(ii)]  $\syn{Z}(f)\backslash Z\subset E$. 
\item[(iii)] $\syn{Z}(f) \backslash \big(\widetilde{\Phi}^{-1}(W_g) \intersection \syn{Z}(f)\big) \subset E$. 
\end{enumerate}
Since $\tilde{S}$ contains at least one point in each irreducible component of $\syn{Z}(f)$, (iii) implies that $E$ is finite (see also Remark \ref{sfinite}). Moreover, by the above step and condition (iii) we have 
$$ \syn{Z}(f) \backslash E \longrightarrow \A^2_F$$ 
is a locally closed immersion. Moreover, $\tilde{S}$ and $E$ are disjoint, and hence $\phi(p) \notin \phi(E)$. Since $E$ is finite, we choose a non-constant polynomial expression $Q$ in $h$ which vanishes on $p$ as well as $E$. For an integer $\ell >>0$ and divisible by $\Char(F)$, we claim that $(\phi-Q^\ell, h)$ presents $(W,\syn{Z}(f),p)$. Let 
$$ \Phi:= (\phi-Q^\ell, h) \ \ \ \ \text{and} \ \ \ \Psi:= \phi-Q^{\ell}.$$ 

To prove the claim we need to verify the conditions of the Definition (1)-(4) \ref{defpresents}. 
Condition (1), i.e. finiteness of $\Psi_{|\syn{Z}(f)}$, follows by \eqref{nntrick1} since 
 $\ell$ is large, and $h_{|\syn{Z}(f)}$ is finite. As $Q$  vanishes on $p$ and $E$, $\Psi(p)\notin \Psi(E)$ follows from $\phi(p)\notin \phi(E)$. Thus if $S:= \Psi^{-1}\Psi(p)\intersection Z$, then $S \subset \syn{Z}(f)\backslash E$. Conditions (2) to (4) of \eqref{defpresents} follow from the conditions (i) to (iii) of $E$ in the beginning of this step. 
 \end{proof}


\vspace{1.5cm}
\begin{center}
Amit Hogadi, IISER Pune, Dr. Homi Bhabha Road, Pashan, Pune : 411008, INDIA\\
email: amit@iiserpune.ac.in\\
\vspace{0.3cm}
Girish Kulkarni, IISER Pune, Dr. Homi Bhabha Road, Pashan, Pune : 411008, INDIA\\
email: girish.kulkarni@students.iiserpune.ac.in
\end{center}
\end{document}